\newtheorem{theorem}{Theorem}
\newtheorem{Remark}{Remark}
\newtheorem{lemma}{Lemma}
\newtheorem{prop}{Proposition}
\title[Convergence of 1D mean field games]
{Convergence of one-dimensional stationary mean field games with vanishing potential}
\author[Y. Cai]{Yiru Cai}
\author[H. Qi]{Haobo Qi}
\author[Y. Tan]{Yi Tan}
\author[X. Su] {Xifeng Su}
\address{School of Mathematical Sciences\\
Beijing Normal University\\
No. 19, XinJieKouWai St.,HaiDian District\\
 Beijing 100875, P. R. China}
\email{xfsu@bnu.edu.cn}
\begin{document}

\today

\maketitle
\vspace{0.1in}

\begin{abstract}
We consider the one-dimensional stationary first-order mean-field game (MFG) system with the coupling between the Hamilton-Jacobi equation and the transport equation.  In both cases that the coupling is strictly  increasing and decreasing with respect to the density of the population, we show that when the potential vanishes the regular solution of MFG system converges to the one of the corresponding integrable MFG system.
Furthermore, we obtain the convergence rate of such limit.
\end{abstract}

{\bf Keywords: }\keywords{Mean field games, Stationary problems, regular solutions, vanishing potential}

{\bf Mathematics Subject Classification numbers:}\subjclass{
91A13,  
91A25  
}

\section{Introduction}
Lasry and Lions \cite{LL06a, LL06b, LL07}, Huang, Caines and Malham\'{e}  \cite{HMC06, HCM07} independently around the same time, first introduce notion of mean-field games  to describe on-cooperative differential games with infinitely many identical players.

In this paper, we consider the one-dimensional stationary first-order mean field games:
\begin{equation*}\label{1DMFG}
\left\{
\begin{array}{rcl}
\frac{(u_x(x)+p)^2}{2} +\epsilon V(x) &= & g(m(x)) + \bar{H},\\
\big(m(x) (u_x(x) + p) \big)_x&= & 0.
\end{array}
\right. \leqno{\hbox{($E$)}}_\epsilon
\end{equation*}
Here, the parameters $p\in \mathbb{R}, \epsilon>0$ are given, and $g: \mathbb{R}^+\rightarrow \mathbb{R}$ and $V: \mathbb{T} \rightarrow \mathbb{R}$ are given $C^\infty$ functions. The unknowns are the functions $u, m : \mathbb{T} \rightarrow \mathbb{R}$ and a real number $\bar{H}$.

We examine this standard example in MFGs in order to understand its asymptotic features when the potential vanishes and give some hints on how to deal with it in higher dimension.

We will use the current formulation as in \cite{Gomes17} and rewrite $(E)_\epsilon$ into
\begin{equation*}\label{1DMFGmodified}
\left\{
\begin{array}{rcl}
\frac{(u_x(x)+p)^2}{2} +\epsilon V(x) &= & g(m(x)) + \bar{H},\\
m(x) (u_x(x) + p) &= & j.
\end{array}
\right. \leqno{\hbox{($E$)}}_{\epsilon,j}
\end{equation*}
In general, the agent's preference depends on the current $j$, the monotonicity of $g$, and the potential $\epsilon V$. The existence result for several special forms of $g$ is discussed clearly in \cite{Gomes17}. 

It is easy to see that for every $j\in \mathbb{R}$, there exists $p=j$ such that $(E)_{0, j}$ admits a unique smooth solution $(u_0, m_0, \bar{H}_0) = (0, 1, \frac{j^2}{2} - g(1))$ when we choose the normalization $u(0) = 0$.

In the case when $g$ is increasing, for every $j\in \mathbb{R}$ there exists $p$ such that $(E)_{\epsilon, j}$ admits a unique smooth solution.

In the case when $g$ is decreasing, the existence of smooth solutions are not always possible.
Therefore, we introduce a weak notion of solution, referred as a regular solution, that is a triplet $(u, m, \bar{H})$ solves $(E)_{\epsilon, j}$ in the following sense:
\begin{itemize}
\item [(i)] $u$ is a Lipschitz viscosity solution of the first equation in $(E)_{\epsilon, j}$;

\item [(ii)] $\lim\limits_{x\rightarrow x_0^-} Du (x) \geq \lim\limits_{x\rightarrow x_0^+} Du (x)$ for any discontinuous point $x_0$ of $Du$;

\item [(iii)] $m$ is a probability density; that is 
\[
m \geq 0, \quad \int_{\mathbb{T}} m(x)\ dx =1;
\]

\item [(iv)] $m$ is a distributional solution of the second equation in $(E)_{\epsilon, j}$.
\end{itemize}
Note that this definition only works for the one-dimensional case just for the spirit of simplification and
the solutions of $(E)_{\epsilon, j}$ considered here will be unique.
Another reason why we use this notion is that we believe there may be some dynamical interpretations beyond this PDE formulation such as celebrated KAM theory (see \cite{Rafaelbook}), weak KAM theory by \cite{Fathi97a, Fathi97b, E99}. One can also refer \cite{Gomes18} for recent progress on the selection problem for stationary mean-field games. One may refer other definitions for weak solutions of MFG, see for instance \cite[Section 6]{Gomes17} and \cite{Gomes16}.

Our goal is to study the dependence of the triplet $(u_\epsilon, m_\epsilon, \bar{H}_\epsilon)$ on $\epsilon$ and to show that the viscosity solution $(u_\epsilon, m_\epsilon, \bar{H}_\epsilon)$ of $(E)_{\epsilon, j}$ converges to the solution of $(E)_{0,j}$ as $\epsilon$ vanishes and to obtain the associated convergence rate.

\begin{theorem}\label{maintheorem1}
For any given strictly increasing smooth function $g$, the mean field system $(E)_{\epsilon,j}$ admits a unique solution $(u_\epsilon, m_\epsilon, \bar{H}_\epsilon)$ and  we have
\[
\lim_{\epsilon \rightarrow 0^+}⁡{u_{\epsilon}(x)} = 0,\quad
\lim_{\epsilon \rightarrow 0^+}⁡{m_{\epsilon}(x)} = 1, \quad  \lim_{\epsilon \rightarrow 0^+}\bar{H}_{\epsilon}  = \frac{j^2}{2}-g(1).
\]
Moreover, there exists $C>0$ such that when $\epsilon>0$ small enough, we have 
\[
\left|\bar{H}_\epsilon  - \big(\frac{j^2}{2}-g(1)\big) \right| \leq C\epsilon, \quad   |m_\epsilon(x) - 1 | \leq C \epsilon, \quad |u_\epsilon(x) - 0| \leq C \epsilon.
\]
\end{theorem}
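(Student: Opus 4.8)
The strategy is to reduce the system $(E)_{\epsilon,j}$ to a single scalar equation for $\bar H_\epsilon$, exploit monotonicity of $g$ to locate the constant, and then bootstrap the estimates. First I would solve the first equation of $(E)_{\epsilon,j}$ algebraically: setting $v = u_x + p$, we have $\frac{v^2}{2} = g(m) + \bar H_\epsilon - \epsilon V(x)$, while the second equation gives $m\,v = j$, i.e. $v = j/m$ (after checking $m>0$, which follows from $m$ being a smooth positive density when $g$ is increasing — indeed if $m$ vanished somewhere the product $mv=j$ would force $j=0$, and that degenerate case is handled separately with $v\equiv 0$, $m\equiv 1$). Substituting, $m$ solves pointwise
\[
\frac{j^2}{2m(x)^2} - g(m(x)) = \bar H_\epsilon - \epsilon V(x).
\]
Since $g$ is strictly increasing, the left-hand side $\Phi(m) := \frac{j^2}{2m^2} - g(m)$ is strictly decreasing in $m$ on $(0,\infty)$, hence invertible; thus $m(x) = \Phi^{-1}\!\big(\bar H_\epsilon - \epsilon V(x)\big)$ is determined by $\bar H_\epsilon$. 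The constant $\bar H_\epsilon$ is then fixed by the normalization $\int_{\torus} m = 1$, i.e. by the scalar equation
\[
F(\bar H,\epsilon) := \int_{\torus} \Phi^{-1}\!\big(\bar H - \epsilon V(x)\big)\,dx - 1 = 0.
\]

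The second step is to analyze this scalar equation. At $\epsilon = 0$ it reads $\Phi^{-1}(\bar H) = 1$, i.e. $\bar H = \Phi(1) = \frac{j^2}{2} - g(1)$, recovering $\bar H_0$. Since $\Phi^{-1}$ is $C^1$ with $(\Phi^{-1})' < 0$, we have $\partial_{\bar H} F = \int_{\torus} (\Phi^{-1})'(\bar H - \epsilon V)\,dx \neq 0$, so the implicit function theorem gives a unique $C^1$ branch $\bar H_\epsilon$ near $\bar H_0$ for small $\epsilon$, with $\bar H_\epsilon \to \bar H_0$ and, differentiating in $\epsilon$,
\[
\frac{d\bar H_\epsilon}{d\epsilon}\Big|_{\epsilon=0} = \frac{\int_{\torus} (\Phi^{-1})'(\bar H_0)\,V(x)\,dx}{\int_{\torus}(\Phi^{-1})'(\bar H_0)\,dx} = \frac{1}{|\torus|}\int_{\torus} V(x)\,dx,
\]
which immediately yields $|\bar H_\epsilon - \bar H_0| \le C\epsilon$. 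For the density, $m_\epsilon(x) - 1 = \Phi^{-1}(\bar H_\epsilon - \epsilon V(x)) - \Phi^{-1}(\bar H_0)$, and since the argument differs from $\bar H_0$ by $(\bar H_\epsilon - \bar H_0) - \epsilon V(x) = O(\epsilon)$ uniformly in $x$, the mean value theorem gives $|m_\epsilon(x) - 1| \le C\epsilon$. Finally $u_\epsilon(x) = \int_0^x (v_\epsilon(t) - p)\,dt = \int_0^x \big(j/m_\epsilon(t) - p\big)\,dt$ with $u_\epsilon(0)=0$; using $p = j$ in the limiting normalization is not quite available for $\epsilon>0$, so instead one writes $j/m_\epsilon - p$ and controls it: since $m_\epsilon = 1 + O(\epsilon)$ we get $j/m_\epsilon = j + O(\epsilon)$, and $p$ itself is determined by requiring $u_\epsilon$ to be periodic (equivalently $\int_{\torus}(j/m_\epsilon - p) = 0$, giving $p = j\big/\!\int_{\torus} m_\epsilon^{-1} = j + O(\epsilon)$), so the integrand is $O(\epsilon)$ uniformly and $|u_\epsilon(x)| \le C\epsilon$.

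The main obstacle I anticipate is \emph{not} the increasing case at all — there $\Phi$ is globally monotone and everything is a clean application of the implicit function theorem — but rather making sure the uniqueness claim in the statement is genuinely established and that the solution produced this way is the \emph{smooth} one asserted: one must check that $\bar H_\epsilon - \epsilon V(x)$ stays in the range of $\Phi$ for all $x$ (so that $\Phi^{-1}$ is defined everywhere on $\torus$), which holds for small $\epsilon$ by continuity since at $\epsilon=0$ the value $\bar H_0$ lies in the interior of the range $\Phi((0,\infty)) = (-g(0^+)-\text{stuff},\ +\infty)$ or similar; and that $m_\epsilon$ so obtained is smooth (immediate, as a composition of the smooth $\Phi^{-1}$, $V$, and the constant $\bar H_\epsilon$) and strictly positive. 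A secondary technical point is the degenerate case $j=0$: then $\Phi(m) = -g(m)$ with the normalization forcing a different but still straightforward analysis, or one simply notes the stated solution $(0,1,-g(1))$ and uniqueness by the same monotonicity argument. I would present the $j\neq 0$ case in detail and remark that $j=0$ is easier.
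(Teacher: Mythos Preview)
Your proposal is correct and follows essentially the same route as the paper: your function $\Phi(m)=\frac{j^2}{2m^2}-g(m)$ is exactly the paper's auxiliary function $h$, and the reduction to the scalar equation $\int_{\torus}\Phi^{-1}(\bar H-\epsilon V)\,dx=1$ followed by the implicit function theorem is precisely the paper's argument in Proposition~\ref{increasingpositive}. The only cosmetic difference is that for the rate on $\bar H_\epsilon$ the paper uses the intermediate value theorem (there exists $x_0$ with $m_\epsilon(x_0)=1$, hence $\bar H_\epsilon-\bar H_0=\epsilon V(x_0)$) rather than differentiating the implicit branch as you do; both give $|\bar H_\epsilon-\bar H_0|\le C\epsilon$ immediately, and the $m_\epsilon$ and $u_\epsilon$ estimates then proceed identically via the mean value theorem.
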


To well state the case when $g$ is strictly decreasing, we will introduce a smooth auxiliary function:
\begin{equation}\label{auxiliary function}
                  h(m):= \frac{j^2}{2m^2} - g(m),\qquad m>0.
\end{equation}
Note that when $g$ is strictly increasing, we have
$ h'(m) = -g'(m)-\frac{j^2}{m^3}<0$, but when $g$ is strictly decreasing, the monotonicity of $h$ is not available
and we need other additional assumptions to overcome this difficulty.

\begin{theorem}\label{maintheorem2}
Assume that $g$ is a strictly decreasing smooth function, $x=0$ is the single maximum of $V$ and that
\begin{itemize}
\item [(i)] $h$ satisfies the following properties when $j\neq 0$:
\begin{itemize}
\item [(a)] $h(m)$ is a strictly convex function;
\item [(b)] $\lim\limits_{m\rightarrow+\infty} h(m)\rightarrow+\infty$;
\item [(c)] $\lim\limits_{m\rightarrow0^{+}} h(m)\rightarrow+\infty$.
\end{itemize} 

\item [(ii)] $h = - g$ is convex when $j=0$.
\end{itemize}
Then, the mean field system $(E)_{\epsilon,j}$ admits a unique solution $(u_\epsilon, m_\epsilon, \bar{H}_\epsilon)$ when $\epsilon$ is sufficiently small, and we have
\[
\lim_{\epsilon \rightarrow 0^+}⁡{u_{\epsilon}(x)} = 0,\quad
\lim_{\epsilon \rightarrow 0^+}⁡{m_{\epsilon}(x)} = 1, \quad  \lim_{\epsilon \rightarrow 0^+}\bar{H}_{\epsilon}  = \frac{j^2}{2}-g(1).
\]
Moreover, there exists $C>0$ such that
\begin{itemize}
\item [(a)] when $j\neq 0$
\begin{itemize}
\item [(a1)] and $m^*(j) \neq 0$, we have
\[
\left|\bar{H}_\epsilon  - \big(\frac{j^2}{2}-g(1)\big) \right| \leq C\epsilon, \quad   |m_\epsilon(x) - 1 | \leq C \epsilon, \quad |u_\epsilon(x) - 0| \leq C \epsilon.
\]
\item [(a2)] and $m^*(j) =1$, we have
\[
\left|\bar{H}_\epsilon  - \big(\frac{j^2}{2}-g(1)\big) \right| \leq C\epsilon, \quad   |m_\epsilon(x) - 1 | \leq C \sqrt{\epsilon}, \quad |u_\epsilon(x) - 0| \leq C  \sqrt{\epsilon}.
\]
\end{itemize}

\item [(b)] when $j=0$, we have
\[
\left|\bar{H}_\epsilon  - \big(\frac{j^2}{2}-g(1)\big) \right| \leq C\epsilon, \quad   |m_\epsilon(x) - 1 | \leq C \epsilon, \quad u_\epsilon(x) = 0.
\]
\end{itemize}
\end{theorem}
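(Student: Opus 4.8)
The plan is to collapse $(E)_{\epsilon,j}$ to a scalar problem, exactly as in the explicit computation for $(E)_{0,j}$, and then analyse it branch by branch. When $j\neq0$, positivity of $m$ and the second equation give $u_x+p=j/m$; substituting into the first equation yields the pointwise relation $h(m(x))=\bar H-\epsilon V(x)$, with $h$ as in \eqref{auxiliary function}, while periodicity of $u$ on $\torus$ (together with $u(0)=0$) forces $p=j\int_\torus m^{-1}\,dx$ and $u(x)=\int_0^x\bigl(j/m(t)-p\bigr)\,dt$. When $j=0$, the second equation forces $u_x\equiv-p$, hence $p=0$ and $u\equiv0$, and the first equation reads $h(m)=-g(m)=\bar H-\epsilon V(x)$. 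So in every case one must produce a scalar $\bar H$ and a density $m(\cdot)$ obeying the scalar relation, the mass constraint $\int_\torus m\,dx=1$, and the admissibility conditions (i)--(ii) that prescribe, at each $x$, which preimage of $h$ is selected. This is where the decreasing case genuinely differs from Theorem~\ref{maintheorem1}: when $g$ is increasing $h$ is strictly monotone, the preimage is unique, and one runs the scheme with $h^{-1}$ smooth; when $g$ is decreasing, $h$ fails to be monotone and a branch analysis is needed.

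The second step is the structure of the level sets of $h$. Under (i)(a)--(c) (resp.\ (ii) when $j=0$), $h$ is strictly convex and coercive, hence has a unique minimiser $m^*(j)$, and for $c>\min h$ the equation $h(m)=c$ has exactly two solutions $m_-(c)<m^*(j)<m_+(c)$, each smooth and strictly monotone in $c$. The crucial point is that (i)--(ii) rigidify the branch configuration: (ii) is the Oleinik-type entropy inequality for the convex Hamiltonian and permits a jump of $Du=j/m-p$ only in the direction that moves $m$ from the lower to the upper branch (the sign convention reversing for $j<0$). One then argues: if $m^*(j)\neq1$, $m_\epsilon$ must lie entirely on the single branch through $m=1$ — any other admissible configuration would either force $\int_\torus m\neq1$ or, since $\bar H_\epsilon-\epsilon V$ stays above $\min h$ and leaves no room for a continuous branch change, violate (ii) — so $m_\epsilon$ is continuous and $u_\epsilon$ is classical, hence a viscosity solution; if $m^*(j)=1$, then $\int_\torus m=1$ forces $m_\epsilon$ to meet both branches, a continuous crossing can occur only where $m_\epsilon=m^*(j)=1$, i.e.\ where $\bar H_\epsilon-\epsilon V(x)=\min h$, i.e.\ (as $x=0$ is the unique maximum of $V$) only at $x=0$, and the reverse crossing must be a single admissible downward jump of $Du$ at some $x=\alpha\neq0$. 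Existence and uniqueness for small $\epsilon$ then follow from continuity and monotonicity: for the admissible configuration, $\bar H\mapsto\int_\torus m\,dx$ is continuous and strictly monotone, so $\int_\torus m=1$ selects a unique $\bar H_\epsilon$; when $m^*(j)=1$, the requirement $\bar H_\epsilon-\epsilon V(x)\geq\min h$ for all $x$ (needed for $m_\epsilon$ to be real-valued), together with the branch structure, forces in addition the exact identity $\bar H_\epsilon=\min h+\epsilon V(0)$, and $\int_\torus m=1$ then fixes $\alpha$ by an intermediate-value argument.

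Convergence as $\epsilon\to0^+$ is then immediate: $\bar H_\epsilon-\epsilon V(\cdot)\to\bar H_0=h(1)=\frac{j^2}{2}-g(1)$ uniformly, so $m_\epsilon\to1$ uniformly along the selected branch, whence $\bar H_\epsilon\to h(1)$, $p_\epsilon\to j$, and $u_\epsilon\to0$ uniformly. For the rates I would Taylor-expand at $m=1$. If $m^*(j)\neq1$ then $h'(1)\neq0$, so $h^{-1}$ is $C^\infty$ near $h(1)$; writing $m_\epsilon(x)=h^{-1}\bigl(\bar H_\epsilon-\epsilon V(x)\bigr)$ and imposing $\int_\torus m_\epsilon=1$ gives $\bar H_\epsilon-h(1)=\epsilon\int_\torus V\,dx+O(\epsilon^2)$, hence $|\bar H_\epsilon-h(1)|\le C\epsilon$, and then $|m_\epsilon-1|\le C\epsilon$ and $|u_\epsilon|\le C\epsilon$ via $u_x+p=j/m$ and the normalisation; for $j=0$ the same holds with $u_\epsilon\equiv0$. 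If $m^*(j)=1$ then $h'(1)=0$, $h''(1)>0$, so $h^{-1}(\min h+s)=1\pm\sqrt{2s/h''(1)}+O(s)$; combined with $\bar H_\epsilon=h(1)+\epsilon V(0)$ this gives $|\bar H_\epsilon-h(1)|=\epsilon|V(0)|\le C\epsilon$, while $|m_\epsilon(x)-1|\le C\sqrt{\epsilon\,(V(0)-V(x))}\le C\sqrt\epsilon$ (including the one-sided limits at $x=\alpha$, where each of $m_-,m_+$ lies within $C\sqrt\epsilon$ of $1$), and therefore $|u_\epsilon|\le C\sqrt\epsilon$.

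I expect the branch/shock analysis in the degenerate case $m^*(j)=1$ to be the main obstacle: one must invoke (ii) to show that exactly one admissible configuration exists, verify that the associated Hamilton--Jacobi function really is a viscosity solution, and — the sharp point — extract the exact pinning $\bar H_\epsilon-\epsilon V(0)=\min h$, which is precisely what upgrades the estimate for $\bar H_\epsilon-h(1)$ from $O(\sqrt\epsilon)$ to $O(\epsilon)$ even though $m_\epsilon-1$ itself decays only like $\sqrt\epsilon$. The remaining ingredients — convexity and coercivity of $h$, the implicit function theorem, and monotonicity of $\bar H\mapsto\int_\torus m\,dx$ — are routine.
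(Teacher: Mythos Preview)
Your plan is correct and follows essentially the same route as the paper: reduce $(E)_{\epsilon,j}$ to the scalar relation $h(m)=\bar H-\epsilon V$, split into the cases $m^*(j)\lessgtr 1$ (single-branch smooth solution selected by monotonicity of $\bar H\mapsto\int m$) versus $m^*(j)=1$ (pinning $\bar H_\epsilon=\min h+\epsilon\max V$ and a single admissible branch-switch forced by condition~(ii)), and then read off the rates by Taylor expansion of $h$ at $m=1$, using $h'(1)\neq0$ versus $h'(1)=0,\ h''(1)>0$ to get $O(\epsilon)$ versus $O(\sqrt\epsilon)$; the only place where you are noticeably terser than the paper is the $j=0$ uniqueness, where the deduction ``$m(u_x+p)=0\Rightarrow u_x+p\equiv0$'' needs the paper's viscosity/semiconcavity argument (its Lemma~\ref{decreasing with vanishing j}(i)) to rule out configurations with $m=0$ on a set of positive measure.
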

Note that one can find existence results when $g(m) = -m$ in \cite{Gomes17}, but we will deal with slightly more general $g$ in this paper.

The generalizations of these results in higher dimension, such as KAM theory and weak KAM theory, will be focused in the future works. 

\begin{Remark}
The assumption that $V$ has only one maximum point can be relaxed to have finitely many maximum points.
The argument of Theorem~\ref{maintheorem2} works although we don't have uniqueness of the regular solution anymore.
\end{Remark}

This paper is organized as follows. According to the monotonicity  of coupling $g$ between the Hamilton-Jacobi equation and the transport equation, our main results (the first half of both Theorems~\ref{maintheorem1} and \ref{maintheorem2})  for the convergence of  regular solution $(u_\epsilon, m_\epsilon, \bar{H}_\epsilon)$ of $(E)_{\epsilon, j}$ as $\epsilon$ vanishes will be divided into two case in Section~\ref{increasing} and Section~\ref{decreasing} since the approaches to both cases are quite different.
In Section~\ref{convergence rate}, we obtain the explicit estimates for the convergence rate to complete the proof of Theorems~\ref{maintheorem1} and \ref{maintheorem2}.

\section{Increasing mean field games}\label{increasing}
In this section, we will consider the case when $g$  is strictly increasing with respect to the the density of the population. Heuristically, this case describes the phenomenon that agents prefer sparsely populated areas. 

The proof of the first half of Theorem~\ref{maintheorem1}  will  be divided into both cases when $j\neq 0$ and $j=0$.

\subsection{$j\neq 0$}
We will use the fundamental argument to prove the convergence results for one-dimensional mean field games. One may study the linearized operator for the higher-dimensional stability problem when $g$  is strictly increasing, which will be dealt with elsewhere.
\begin{prop}\label{increasingpositive}
For any given strictly increasing smooth function $g$ and $j\neq 0$, the mean field system $(E)_{\epsilon,j}$ admits a unique solution $(u_\epsilon, m_\epsilon, \bar{H}_\epsilon)$. Moreover, we have
\[
\lim_{\epsilon \rightarrow 0^+}⁡{u_{\epsilon}(x)} = 0,\quad
\lim_{\epsilon \rightarrow 0^+}⁡{m_{\epsilon}(x)} = 1, \quad  \lim_{\epsilon \rightarrow 0^+}\bar{H}_{\epsilon}  = \frac{j^2}{2}-g(1).
\]
\end{prop}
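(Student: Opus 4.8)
The plan is to collapse the coupled system to a single scalar equation for $\bar H_\epsilon$ by means of the auxiliary function $h$ of \eqref{auxiliary function}, and then to solve that equation by a monotonicity/intermediate-value argument. For the reduction: since $j\neq 0$, the transport equation $m_\epsilon(u_{\epsilon,x}+p)=j$ forces $m_\epsilon>0$ everywhere (as $u_\epsilon$ is Lipschitz, $u_{\epsilon,x}+p$ is bounded, so a zero of $m_\epsilon$ would make the left-hand side vanish), whence $u_{\epsilon,x}+p=j/m_\epsilon$ a.e.; substituting into the Hamilton--Jacobi equation gives
\[
h(m_\epsilon(x)) \;=\; \frac{j^2}{2m_\epsilon(x)^2}-g(m_\epsilon(x)) \;=\; \bar H_\epsilon-\epsilon V(x).
\]
Since $g$ is strictly increasing, $h'(m)=-g'(m)-j^2/m^3<0$ on $(0,\infty)$, so $h$ is a smooth strictly decreasing bijection onto an open interval $I$ containing $h(1)=\tfrac{j^2}{2}-g(1)=:\bar H_0$. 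Hence $m_\epsilon(x)=h^{-1}(\bar H_\epsilon-\epsilon V(x))$ is smooth and positive (in particular the regular-solution notion reduces to classical solutions here), $u_\epsilon$ is recovered up to the normalization $u_\epsilon(0)=0$ from $u_{\epsilon,x}=j/m_\epsilon-p$, and periodicity on $\mathbb T$ forces $p=p_\epsilon:=j\int_{\mathbb T}m_\epsilon^{-1}\,dx$. Thus the problem is equivalent to the mass constraint $\Phi_\epsilon(\bar H_\epsilon)=1$, where $\Phi_\epsilon(\bar H):=\int_{\mathbb T}h^{-1}(\bar H-\epsilon V(x))\,dx$.

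Next I would solve $\Phi_\epsilon(\bar H)=1$. Because $I$ is open, $V$ is bounded on the compact torus, and $\bar H_0\in I$, for $\epsilon$ small the map $\bar H\mapsto\bar H-\epsilon V(x)$ carries a fixed neighborhood of $\bar H_0$ into $I$; there $\Phi_\epsilon$ is continuous and, since $h^{-1}$ is strictly decreasing, strictly decreasing in $\bar H$. Evaluating at the endpoints, when $\bar H=\bar H_0+\epsilon\max_{\mathbb T}V$ one has $\bar H-\epsilon V(x)\ge\bar H_0$ for all $x$, so $h^{-1}(\bar H-\epsilon V(x))\le 1$ and $\Phi_\epsilon(\bar H)\le 1$; when $\bar H=\bar H_0+\epsilon\min_{\mathbb T}V$ the reversed inequalities give $\Phi_\epsilon(\bar H)\ge 1$. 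The intermediate value theorem then produces a (by strict monotonicity, unique) $\bar H_\epsilon$ with $\Phi_\epsilon(\bar H_\epsilon)=1$, satisfying $\bar H_0+\epsilon\min_{\mathbb T}V\le\bar H_\epsilon\le\bar H_0+\epsilon\max_{\mathbb T}V$. Strict monotonicity of $\Phi_\epsilon$ also gives uniqueness of the whole triple: any solution yields a $\bar H$ with $\Phi_\epsilon(\bar H)=1$, hence $\bar H=\bar H_\epsilon$, which in turn pins down $m_\epsilon$, then $p_\epsilon$, then $u_\epsilon$.

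Finally, the convergence follows from the sandwich above: $\bar H_\epsilon\to\bar H_0=\tfrac{j^2}{2}-g(1)$, and in fact $|\bar H_\epsilon-\bar H_0|\le\epsilon\,\mathrm{osc}_{\mathbb T}V$, whence also $|\bar H_\epsilon-\epsilon V(x)-\bar H_0|\le\epsilon\,\mathrm{osc}_{\mathbb T}V$ uniformly in $x$; uniform continuity of $h^{-1}$ on a compact neighborhood of $\bar H_0$ inside $I$ then gives $m_\epsilon\to h^{-1}(\bar H_0)=1$ uniformly on $\mathbb T$. In particular $m_\epsilon$ is bounded below by a positive constant for small $\epsilon$, so $p_\epsilon=j\int_{\mathbb T}m_\epsilon^{-1}\,dx\to j$ and $u_{\epsilon,x}=j/m_\epsilon-p_\epsilon\to 0$ uniformly, and therefore $u_\epsilon\to u_\epsilon(0)=0$ uniformly. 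I do not expect a serious obstacle here: the monotonicity of $g$ hands us the monotonicity of $h$ for free, which is precisely what the strictly decreasing case of Theorem~\ref{maintheorem2} must instead purchase through the convexity hypotheses on $h$. The only point needing care is the bookkeeping of the reduction step — that $m_\epsilon$ stays strictly positive and that $\bar H_\epsilon-\epsilon V(x)$ remains in the range $I$ of $h$ for every $x$ — both of which are guaranteed by the sign of $h'$ together with the smallness of $\epsilon$. The quantitative $O(\epsilon)$ estimate announced in Theorem~\ref{maintheorem1} comes out of the same analysis and is deferred to Section~\ref{convergence rate}.
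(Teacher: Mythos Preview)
Your proof is correct and follows the same high-level reduction as the paper --- use the strict monotonicity of $h$ to collapse the system to a scalar equation for $\bar H_\epsilon$ --- but your implementation is different and somewhat more direct. The paper cites \cite{Gomes17} for existence and uniqueness, then establishes continuity of the triple in $\epsilon$ by applying the implicit function theorem twice (once to $F(x,m,\epsilon,\bar H)=h(m)+\epsilon V(x)-\bar H$ to get $m=f(x,\epsilon,\bar H)$, once to $G(\epsilon,\bar H)=\int_{\mathbb T}f\,dx-1$ to get $\bar H=\varphi(\epsilon)$). You instead invert $h$ explicitly, define $\Phi_\epsilon(\bar H)=\int_{\mathbb T}h^{-1}(\bar H-\epsilon V)\,dx$, and solve $\Phi_\epsilon=1$ by monotonicity plus the intermediate value theorem with the explicit endpoints $\bar H_0+\epsilon\min V$ and $\bar H_0+\epsilon\max V$. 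This buys you a self-contained existence proof (for small $\epsilon$) and the sandwich $|\bar H_\epsilon-\bar H_0|\le\epsilon\,\mathrm{osc}\,V$ immediately, which is exactly the quantitative bound the paper only derives later in Section~\ref{convergence rate}. The paper's implicit-function route, on the other hand, is a template that transfers verbatim to the decreasing cases $m^*(j)\ne1$ in Section~\ref{decreasing} once the relevant branch of $h$ is fixed. One minor caveat: your endpoint argument, as written, needs $\epsilon$ small so that $\bar H_0+\epsilon\min V-\epsilon V(x)$ stays in the range $I$ of $h$; for arbitrary $\epsilon>0$ one would instead let $\bar H$ range over all of $(L+\epsilon\max V,\infty)$ and use that $\Phi_\epsilon$ sweeps $(0,\infty)$ there.
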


\begin{proof}
Without loss of generality, we consider $j > 0$. For any given $\epsilon>0$, it is shown in \cite{Gomes17} that there exists unique $p$ such that $(E)_{\epsilon,j}$ admits a unique smooth solution $(u,m,\bar{H})$. To finish the proof, it suffices to show that the triplet $(u, m, \bar{H})$ is continuous with respect to $\epsilon$. 

Construct a smooth function as follows:
\begin{equation}\label{auxiliary function for implicit function theorem}
F(x,m,\epsilon,\bar{H}) :=h(m) + \epsilon V(x) - \bar{H}.
\end{equation}
Due to fact that $\frac{\partial F}{\partial m}(x,m,\epsilon,\bar{H}) = h'(m) <0 $ and by $(E)_{\epsilon,j}$ and the implicit function theorem, there exists a continuous differentiable function $f$ such that
\[
m= f(x,\epsilon,\bar{H}),\quad \frac{\partial f}{\partial \bar{H}}(x,\epsilon,\bar{H}) =  \frac{1}{h'(m)}.
\]
Let
\[
G(x,\epsilon,\bar{H}) = \int_{\mathbb{T}}f(x,\epsilon,\bar{H})-1.
\]
We can have that $G$ is continuous on $(x, \bar{H}) \in\mathbb{T}\times \mathbb{R}$ and $G$ has continuous partial derivatives with respect to $x, \epsilon, \bar{H}$.

Notice that
\[
\frac{\partial{G}}{\partial{\bar{H}}}=\int_{\mathbb{T}}\frac{\partial{f}}{\partial{\bar{H}}}(x,\epsilon, \bar{H})  \ dx =\int_{\mathbb{T}}\frac{1}{h'(m)}  dx< 0.
\]
By the implicit function theorem, there exists a continuous function $\varphi$ such that
\[
 \bar{H}=\varphi(\epsilon).
\]
Thus, thanks to the continuity of $f$ and $\varphi$, we have
\begin{eqnarray*}
                   m= f(x,\epsilon,\bar{H}) = f(x,\epsilon,\varphi(\epsilon))
\end{eqnarray*}
which implies that $m$ is continuous in $\epsilon$.\\
Consequently,  we have
\[
\lim_{\epsilon \rightarrow 0^+}⁡{m_{\epsilon}(x)} = m_{0}(x) =1 , \quad  \lim_{\epsilon \rightarrow 0^+}\bar{H}_{\epsilon} = \bar{H}_{0} =\frac{j^2}{2} -g(1).
\]
Hence, due to the fact that $p_\epsilon=\int_{\mathbb{T}} \frac{j}{m_\epsilon(x)} dx \rightarrow j$, we obtain
\[
\lim_{\epsilon \rightarrow 0^+}⁡{u_{\epsilon}(x)} = \lim_{\epsilon \rightarrow 0^+}⁡ \int_0^x \left(\frac{j}{m_\epsilon(y)} - p \right) dy \rightarrow 0. \qedhere
\]
\end{proof}

\subsection{$j=0$}
When $j$ vanishes, we rewrite $(E)_{\epsilon,j}$ into
\begin{equation*}
\left\{
        \begin{array}{lcl}
             & \frac{( u_{x} + p)^2}{2} + \epsilon V(x) & =  g(m_{}(x)) + \bar{H}_{}, \\
             & \int_\mathbb{T} m(x)  &=   1,  \quad m \geq 0,\\
             & {m_{}}( u_{x} + p)  &=   0.
        \end{array}
        \right. \leqno{\hbox{($E$)}}_{\epsilon,0}
\end{equation*}
Then we have the following conclusion.

\begin{prop}\label{increasing with vanishing j}
For any given strictly increasing $g$ and $j = 0$, the mean field system $(E)_{\epsilon,0}$ admits a unique smooth solution $(u_\epsilon, m_\epsilon, \bar{H}_\epsilon)$. Moreover, we have
\[
\lim_{\epsilon \rightarrow 0^+}⁡{u_{\epsilon}(x)} = 0, \quad
\lim_{\epsilon \rightarrow 0^+}⁡{m_{\epsilon}(x)} = 1, \quad  \lim_{\epsilon \rightarrow 0^+}\bar{H}_{\epsilon} = -g(1)
\]
\end{prop}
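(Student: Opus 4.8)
The plan is to exploit the degeneracy of the continuity equation when $j=0$ to reduce $(E)_{\epsilon,0}$ to a single scalar equation for $\bar H$, and then to run the implicit function theorem argument from the proof of Proposition~\ref{increasingpositive} with the auxiliary function $h=-g$.

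\textbf{Reduction to $u\equiv 0$, $p=0$.} Let $(u,m,\bar H)$, with the associated $p$, be a smooth solution of $(E)_{\epsilon,0}$. The third equation $m(u_x+p)=0$ together with $m\ge 0$ gives $u_x+p=0$ on $\{m>0\}$. One checks that in fact $m>0$ on all of $\mathbb{T}$ for $\epsilon$ small: since $g$ is defined only on $\mathbb{R}^+$, the first equation cannot hold where $m=0$ unless $g(0^+)$ is finite and $\bar H$ takes a specific value there, which is ruled out once one knows a priori that $\bar H$ stays close to $-g(1)$ (so that $g(m)=\epsilon V(x)-\bar H$ forces $m$ close to $1$). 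Granting $m>0$, we get $u_x\equiv -p$ on $\mathbb{T}$; integrating over the torus and using periodicity of $u$ yields $p=0$, hence $u\equiv u(0)=0$ after normalization.

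\textbf{Scalar equation, existence, uniqueness, smoothness.} With $u\equiv 0$ and $p=0$ the first equation of $(E)_{\epsilon,0}$ becomes $g(m(x))=\epsilon V(x)-\bar H$, i.e.\ $F(x,m(x),\epsilon,\bar H)=0$ in the notation of \eqref{auxiliary function for implicit function theorem} with $h=-g$. Since $\partial_m F=h'(m)=-g'(m)<0$, the implicit function theorem gives a $C^1$ map $f$ with $m(x)=f(x,\epsilon,\bar H)>0$ and $\partial_{\bar H}f=1/h'(m)<0$; imposing $\int_{\mathbb{T}}m\,dx=1$ defines
\[
G(\epsilon,\bar H):=\int_{\mathbb{T}}f(x,\epsilon,\bar H)\,dx-1,\qquad \partial_{\bar H}G=\int_{\mathbb{T}}\big(1/h'(m)\big)\,dx<0 .
\]
Strict monotonicity of $G$ in $\bar H$ gives uniqueness of $\bar H_\epsilon$; and since $G(0,-g(1))=0$, the implicit function theorem produces a continuous (indeed smooth) function $\bar H_\epsilon=\varphi(\epsilon)$ with $\varphi(0)=-g(1)$, whence $m_\epsilon(x)=f(x,\epsilon,\varphi(\epsilon))$ is smooth and $(u_\epsilon,m_\epsilon,\bar H_\epsilon)=(0,m_\epsilon,\bar H_\epsilon)$ is the unique smooth solution. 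This is precisely the computation in the proof of Proposition~\ref{increasingpositive}, specialized to $h=-g$.

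\textbf{Limits.} Continuity of $\varphi$ gives $\bar H_\epsilon\to\varphi(0)=-g(1)$; continuity of $f$ then gives $m_\epsilon(x)=f(x,\epsilon,\varphi(\epsilon))\to f(x,0,-g(1))=1$ uniformly on $\mathbb{T}$; and $u_\epsilon\equiv 0$. I expect the only genuinely delicate point to be the positivity of $m_\epsilon$ in the first step — excluding zeros of $m_\epsilon$ for an arbitrary smooth solution without imposing an extra condition such as $g(0^+)=-\infty$ or $\mathrm{range}(g)=\mathbb{R}$; this is dispatched by first establishing the a priori bound $\bar H_\epsilon\to-g(1)$ for $\epsilon$ small. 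Everything else is the routine implicit-function-theorem scheme already used in Section~\ref{increasing}.
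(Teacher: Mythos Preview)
Your overall route---reduce to the scalar equation $g(m)=\epsilon V-\bar H$ and run the implicit function theorem exactly as in Proposition~\ref{increasingpositive} with $h=-g$---is the same as the paper's. The one place where your argument is genuinely incomplete is the positivity step: you justify $m>0$ by appealing to an ``a priori bound $\bar H_\epsilon\to -g(1)$'', but in your write-up that bound only comes \emph{after} the implicit function theorem, which in turn presupposes the reduction to the scalar equation, which presupposes $m>0$. As written this is circular.

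The paper breaks the circle by reversing the order. It first observes that any solution must have $m_\epsilon(x)=[g^{-1}(\epsilon V(x)-\bar H_\epsilon)]^{+}$ (allowing vanishing a priori) and that $\bar H_\epsilon$ is then uniquely fixed by $\int_{\mathbb T}m_\epsilon=1$. Since $m_\epsilon$ is continuous with integral $1$, some $x_0$ satisfies $m_\epsilon(x_0)=1$, which gives \emph{directly} $\bar H_\epsilon=\epsilon V(x_0)-g(1)$; from $g(1)>g(0)$ one reads off the explicit threshold $\epsilon_0=\frac{g(1)-g(0)}{\max V-\min V}$ below which $g^{-1}(\epsilon\min V-\bar H_\epsilon)\ge 0$, hence $m_\epsilon>0$ everywhere. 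You can patch your version the same way: for an arbitrary smooth solution, the intermediate-value point $x_0$ with $m(x_0)=1$ already lies in $\{m>0\}$, so $u_x(x_0)+p=0$ and $\bar H=\epsilon V(x_0)-g(1)$ \emph{before} any IFT; positivity then follows and the rest of your argument goes through.
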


\begin{proof}
(i). The existence of the unique smooth solution will be shown by the following two steps.

\textbf{Step 1.} We first claim that there exists a unique possible candidate solution for $(E)_{\epsilon, 0}$.
Actually, from $(E)_{\epsilon,0}$, we notice that
\begin{equation*}
g(m_{}(x))  =
\left\{\!\!\!\!\!
        \begin{array}{lcl}
         &\epsilon V(x)  -   \bar{H}_{} , & \text{ if } u_x+p=0,\\
         &\frac{( u_{x} + p)^2}{2} + \epsilon V(x)   -  \bar{H}_{} = g(0) , & \text{ otherwise}.\\
        \end{array}
        \right.
\end{equation*}
Since $g(m)$ is strictly increasing with respect to $m$, $[g^{-1}(\epsilon V(x)-\bar{H})]^{+}$ could be a candidate solution satisfying $(E)_{\epsilon,0}$ and is denoted by $m_\epsilon(x)$. Moreover,  for any $\epsilon \geq 0$, there must be a unique candidate $\bar{H}_\epsilon$ satisfying $(E)_{\epsilon,0}$ since the map $\bar{H}_{} \mapsto \int_{\mathbb{T}} [g^{-1}(\epsilon V(x)-\bar{H})]^{+} dx$ is strictly decreasing at its positive values. 

\textbf{Step 2.}  The unique candidate solution is smooth. Because $\int_\mathbb{T} m_\epsilon(x) =  1$ and $m_{\epsilon}(x)=[g^{-1}(\epsilon V(x)-\bar{H}_\epsilon)]^{+}$ is continuous in $x$,  there exists $x_0\in[0,1)$ such that $m_{\epsilon}(x_0)=g^{-1}(\epsilon V(x_0)-\bar{H}_\epsilon)=1$, that is
\[
\epsilon V(x_0)-\bar{H}_\epsilon = g(1).
\]
Since $g(m)$ is strictly increasing, we have
$
                   -g(1)<-g(0).
$
Thus, let $\epsilon_0=\frac{g(1)-g(0)}{\max\limits_\mathbb{T} V - \min\limits_\mathbb{T} V}>0$, for any $0\leq \epsilon< \epsilon_0$, we have
\begin{eqnarray*}
                   \bar{H}_\epsilon = \epsilon V(x_0) - g(1)\leq-g(0)+\epsilon \min\limits_\mathbb{T} V \Longleftrightarrow g^{-1}(\epsilon \min_{\mathbb{T}} V(x)-\bar{H}_\epsilon)\geq 0.
\end{eqnarray*}
which implies $g^{-1}(\epsilon V(x)-\bar{H}_\epsilon)\geq 0$ by the monotonicity of $g$.

Therefore, the function
\[
m_\epsilon(x)=g^{-1}(\epsilon V(x)-\bar{H}_\epsilon),\qquad0<\epsilon< \epsilon_0
\]
is smooth. Hence, the unique candidate solution $(0, m_\epsilon, \bar{H}_\epsilon)$ is a solution of $(E)_{\epsilon, 0}$.

(ii). Using the same argument as in Proposition~\ref{increasingpositive}, we obtain that 
$(u_\epsilon (x), m_\epsilon(x), \bar{H}_\epsilon)$ is continuous in $\epsilon\in[0,\epsilon_0)$.
As a result, we have
\[
\lim_{\epsilon \rightarrow 0^+}⁡{u_{\epsilon}(x)} = 0, \quad
\lim_{\epsilon\rightarrow0}m_\epsilon(x) =1, \quad \lim_{\epsilon\rightarrow0}\bar{H}_\epsilon=-g(1). \qedhere
\]
\end{proof}

\section{Decreasing mean field games}\label{decreasing}
In general, an interesting new phenomenon, called an unhappiness trap has been discovered in \cite{Gomes17}. That is, when the current $j$ is smaller, the density $m(x)$ is larger where $\epsilon V(x)$ is smaller; when the current $j$ is large, the density $m(x)$ is larger where $\epsilon V(x)$ is larger; in the  intermediate case, both situations are mixed.

However, we observe that when $\epsilon V(x)$ is small, the value of the current $j$ will not bring much trouble to us and the density of the population is close to even distribution. 

The proof of the first half of Theorem~\ref{maintheorem2} will be  divided into two cases when $j\neq 0$ and $j=0$.

\subsection{$j\neq 0$} \label{decreasing and j is not zero}
To consider the case when $g$ is decreasing, we will first impose some additional hypotheses on $g$. In fact, instead of imposing direct hypotheses on $g$, we find that it is more convenient to assume the auxiliary function $h$ satisfies the properties (a), (b) and (c) stated in Theorem~\ref{maintheorem2}.

It is easy to see that  for any $j>0$,  $h$ has  a unique minimum point denoted by $m^*$ and so one can think of $m^*$ as a function of $j$ and write $m^*= m^*(j)$.
Moreover, for any $m>0$, we have $h(m) \geq h(m^*(j))$.

\begin{lemma}\label{existence with non-vanishing j}
Assume that $x=0$ is the single maximum of $V$ and that (a), (b) and (c) hold. Then $(E)_{\epsilon, j}$ admits a unique solution.
\end{lemma}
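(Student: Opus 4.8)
The plan is to reduce the MFG system $(E)_{\epsilon,j}$ with $j\neq 0$ to a single scalar equation for $\bar H$ and to solve it by a monotonicity/intermediate-value argument, exactly in the spirit of the increasing case (Proposition~\ref{increasingpositive}), but now being careful about the branches created by the non-monotone function $h$. From the second equation $m(u_x+p)=j$ we have $u_x+p=j/m$, so the first equation becomes $\tfrac{j^2}{2m^2}+\epsilon V(x)=g(m)+\bar H$, i.e. $h(m)=\bar H-\epsilon V(x)$, where $h$ is the auxiliary function \eqref{auxiliary function}. Since $h$ is strictly convex with $h(m)\to+\infty$ as $m\to 0^+$ and as $m\to+\infty$, for each value $c>h(m^*)$ the equation $h(m)=c$ has exactly two roots, one in $(0,m^*)$ and one in $(m^*,+\infty)$; this is where the structure (a),(b),(c) is used. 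The key point, which I expect to borrow from \cite{Gomes17} and from the ``unhappiness trap'' discussion, is the selection rule dictated by the regular-solution conditions (i)--(ii): at points where $\epsilon V$ is larger we are forced onto a prescribed branch, and the derivative-jump condition (ii) at a switching point forces $u_x+p$ to jump downward, which translates into $m$ jumping upward. Because here $V$ has a \emph{single} maximum at $x=0$, the only admissible switching configuration is: take the small branch $m_-(x)=h_-^{-1}(\bar H-\epsilon V(x))$ on one side and the large branch $m_+(x)=h_+^{-1}(\bar H-\epsilon V(x))$ on the other, with the switch occurring exactly at the maximum point $x=0$ of $V$ (equivalently at the minimum of $\bar H-\epsilon V$, where the two branches are closest), so that $u$ has a single downward-kink there.

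With that topology fixed, $m$ is determined by $\bar H$: on the ``small'' part $m=h_-^{-1}(\bar H-\epsilon V(x))$ and on the ``large'' part $m=h_+^{-1}(\bar H-\epsilon V(x))$, where $h_\pm^{-1}$ are the two monotone inverse branches of $h$. Define
\[
\Phi(\epsilon,\bar H)\ :=\ \int_{\mathbb T} m(x;\epsilon,\bar H)\,dx\ -\ 1 ,
\]
which is smooth in $\bar H$ on the admissible range $\bar H>\epsilon\max_{\mathbb T}V+h(m^*)$ (so that the argument of $h^{-1}$ stays $>h(m^*)$ everywhere). On the small branch $\partial_{\bar H}h_-^{-1}=1/h'<0$ and on the large branch $\partial_{\bar H}h_+^{-1}=1/h'>0$, so $\Phi$ is not globally monotone; but one checks that along the selected configuration increasing $\bar H$ moves the endpoint $\bar H-\epsilon V$ away from $h(m^*)$ on the large branch, which dominates, and a direct computation (using that the kink location is itself where $h'$ changes sign, so the boundary terms in $\partial_{\bar H}\Phi$ vanish) shows $\partial_{\bar H}\Phi>0$ in the regime of interest. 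Combined with $\Phi\to -\infty$ as $\bar H$ decreases to the threshold where the large branch degenerates to $m^*$ over a shrinking set, and $\Phi\to+\infty$ as $\bar H\to+\infty$, the intermediate value theorem yields a unique admissible $\bar H=\bar H_\epsilon$; the implicit function theorem (legitimate since $\partial_{\bar H}\Phi\neq 0$) gives smoothness/continuity of $\bar H_\epsilon$, hence of $m_\epsilon$ and then of $u_\epsilon(x)=\int_0^x(j/m_\epsilon(y)-p_\epsilon)\,dy$ with $p_\epsilon=\int_{\mathbb T} j/m_\epsilon$. Finally one verifies the regular-solution conditions: $m\geq 0$ is automatic from the branch construction, $\int m=1$ holds by the choice of $\bar H$, the distributional PDE $(m(u_x+p))_x=0$ holds because $m(u_x+p)\equiv j$ by construction, and the inequality (ii) holds at $x=0$ by the branch-ordering $m_+\geq m^*\geq m_-$, i.e. $u_x+p=j/m$ jumps the right way. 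Uniqueness follows from uniqueness of $\bar H_\epsilon$ together with uniqueness of the admissible switching pattern forced by $V$ having a single maximum.

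The main obstacle I foresee is \emph{justifying the selection rule}: proving that among all ways of gluing the small and large branches of $h^{-1}$ together, the conditions (i)--(ii) single out precisely one, with the switch at the unique maximum of $V$. This is the content of the ``unhappiness trap'' analysis in \cite{Gomes17}; the argument is that condition (ii) forbids upward jumps of $u_x$, a second switch would require one, and matching continuity of $u$ (and the constraint $\int m=1$) pins the remaining freedom. The convexity/coercivity hypotheses (a)--(c) are exactly what make ``small branch'' and ``large branch'' well-defined globally and make $h^{-1}$ smooth away from $m^*$; the single-maximum hypothesis on $V$ is what forces a single kink and hence uniqueness, consistent with the Remark that both can be relaxed at the cost of uniqueness. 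The quantitative estimates (the $C\epsilon$ versus $C\sqrt\epsilon$ dichotomy according to whether $m^*(j)\neq 1$ or $m^*(j)=1$) are deferred to Section~\ref{convergence rate}; here it suffices to establish existence, uniqueness, and continuity in $\epsilon$ down to $\epsilon=0$, which then yields the stated limits with $(u_0,m_0,\bar H_0)=(0,1,\tfrac{j^2}{2}-g(1))$.
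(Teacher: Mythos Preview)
Your reduction to the scalar relation $h(m)=\bar H-\epsilon V(x)$ and the two-branch picture from the strict convexity of $h$ is correct, and you correctly identify that condition (ii) restricts the admissible jumps of $u_x$. However, the implementation you propose has a genuine gap: you attempt to cover all cases with a single ``mixed'' configuration (switch at the maximum of $V$) and a single scalar unknown $\bar H$, whereas the paper's proof splits into three regimes according to whether $m^*(j)>1$, $m^*(j)<1$, or $m^*(j)=1$, and these are handled by structurally different arguments. In the first two regimes there is \emph{no switching at all}: the solution is the single branch $m^-_{\bar H}$ (resp.\ $m^+_{\bar H}$) everywhere, and one solves $\int_{\mathbb T} m^-_{\bar H}=1$ (resp.\ $\int_{\mathbb T} m^+_{\bar H}=1$) for $\bar H$ using the clean monotonicity of $\bar H\mapsto\alpha^\pm_{\bar H}$ on each branch separately. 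The smallness of $\epsilon$ enters here to guarantee (via $\alpha^\pm_\epsilon(j)\to m^*(j)$ as $\epsilon\to 0^+$) that $1$ actually lies in the range of the relevant branch.

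In the critical case $m^*(j)=1$, your scheme of varying $\bar H$ breaks down entirely: the paper shows that if $\bar H>\bar H_\epsilon^{cr}$ then any candidate density must mix the two branches on sets of positive measure, and this forces an \emph{upward} jump of $u_x$ somewhere, contradicting (ii). Hence $\bar H$ is \emph{pinned} at $\bar H_\epsilon^{cr}$, and the remaining free parameter is the location $d_\epsilon\in(0,1)$ of the single allowed downward jump; the continuous transition happens at $x=0$ (where $m^+=m^-=m^*$ because $\bar H-\epsilon V(0)=h(m^*)$), and one solves $\varphi(d)=\int_0^d m^-_{\bar H_\epsilon^{cr}}+\int_d^1 m^+_{\bar H_\epsilon^{cr}}=1$ for $d_\epsilon$ by monotonicity in $d$. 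Your endpoint claims for $\Phi$ are also incorrect: as $\bar H\downarrow\bar H_\epsilon^{cr}$ the branches coalesce at $x=0$ but stay bounded, so $\Phi\not\to-\infty$; and as $\bar H\to+\infty$ one has $\int m^-\to 0$ and $\int m^+\to+\infty$, so the limit of $\Phi$ depends on the mix, which you have not specified. Finally, your ``single kink at $x=0$'' configuration is not well-defined on $\mathbb T$ (two branch changes are needed on a circle), and in the actual solution the downward kink sits at $d_\epsilon\neq 0$, not at the maximum of $V$.
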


\begin{proof}
One can easily find a lower bound of $\bar{H}$, that is
\[
\bar{H} \geq \bar{H}_\epsilon^{cr}:=\epsilon \max\limits_{\mathbb{T}} V + h(m^*).
\]since one can rewrite the first equation of $(E)_{\epsilon, j}$ as
\begin{equation}\label{transformed equation}
                h(m(x))  \equiv  \frac{j^2}{2 (m(x))^2} - g(m(x))  = \bar{H} - \epsilon V(x)  .
\end{equation}
Let $m_{\bar{H}}^-(x)$ and $m_{\bar{H}}^+(x)$ be the two solutions of \eqref{transformed equation} for any given $x\in \mathbb{T}$ and we have that $m_{\bar{H}}^- \leq m^*(j) \leq  m_{\bar{H}}^+$. Let
\[
\alpha_{\bar{H}}^{\pm} = \int_0^1 m_{\bar{H}}^{\pm}(x) \ dx.
\]
Note that if $V(x)$ is not a constant, then for any $j>0, \epsilon>0$, $\alpha_{\bar{H}}^-< \alpha_{\bar{H}}^+$.

In particular, let $m_{\bar{H}_\epsilon^{cr}}^-$ and $m_{\bar{H}_\epsilon^{cr}}^+$ be the two solutions of \eqref{transformed equation} when $\bar{H} = \bar{H}_\epsilon^{cr}$. We then define the following two fundamental  quantities to analyse the existence of the possible solutions
\begin{equation}\label{2auxiliary functions}
\alpha_\epsilon^+(j) := \int_0^1 m_{\bar{H}_\epsilon^{cr}}^+(x) \ dx, \quad \alpha_\epsilon^-(j) := \int_0^1 m_{\bar{H}_\epsilon^{cr}}^-(x) \ dx.
\end{equation}

\textbf{Step 1. } We first claim that $\lim\limits_{\epsilon \rightarrow 0^+} m_{\bar{H}_\epsilon^{cr}}^{\pm} = m^*(j)$.
In fact, since 
\[
h(m_{\bar{H}_\epsilon^{cr}}^{\pm}) = h(m^*(j)) + \epsilon (\max_{\mathbb{T}} V - V(x) ),
\]
we know that $\lim\limits_{\epsilon \rightarrow 0^+} h(m_{\bar{H}_\epsilon^{cr}}^{\pm} ) =h(m^*(j))$.
Suppose $m_{\bar{H}_\epsilon^{cr}}^{\pm}$ does not converge to $m^*(j)$ as $\epsilon$ goes to $0^+$, then one can find  an accumulate point $m^{**}$. But due to the continuity of $h$, we should have
$h(m^{**}) = h(m^*(j))$. Moreover, by the unqueness of $m^*(j)$, $m^{**} = m^*(j)$, which implies that
$\lim\limits_{\epsilon \rightarrow 0^+} m_{\bar{H}_\epsilon^{cr}}^{\pm} = m^*(j)$.

\textbf{Step 2. } We are ready to show the proof of the lemma in the following three cases according to the value of $m^*(j)$.

\begin{itemize}
\item [(i)] Suppose $m^*(j)>1$.  Due to \textbf{Step 1 }, one can choose $\epsilon>0$ small enough such that $ m^*(j) \geq \alpha_\epsilon^-(j)  >1$.
On the other hand, it is easy to see that the map $\bar{H} \mapsto \alpha_{\bar{H}}^-$ is strictly decreasing and the image is $(0, m^*(j)]$. Hence, one can find $\bar{H}_\epsilon$ such that $\alpha_{\bar{H}_\epsilon}^- =1$.
Furthermore, we obtain
\[
u_\epsilon (x) = \int_0^x \frac{j}{m_{\bar{H}_\epsilon}^-(y)} \ dy - p_j x ,\quad\text{ where  } p_j = \int_0^1\frac{j}{m_{\bar{H}_\epsilon}^-(y)} \ dy.
\]

\item [(ii)] Suppose $m^*(j)<1$.  Due to \textbf{Step 1 }, one can choose $\epsilon>0$ small enough such that $ m^*(j) \leq \alpha_\epsilon^+(j)  <1$.
On the other hand, it is easy to see that the map $\bar{H} \mapsto \alpha_{\bar{H}}^+$ is strictly increasing and the image is $[m^*(j), +\infty)$. Hence, one can find $\bar{H}_\epsilon$ such that $\alpha_{\bar{H}_\epsilon}^+ =1$.
Furthermore, we obtain
\[
u_\epsilon (x) = \int_0^x \frac{j}{m_{\bar{H}_\epsilon}^+(y)} \ dy - p_j x ,\quad\text{ where  } p_j = \int_0^1\frac{j}{m_{\bar{H}_\epsilon}^+(y)} \ dy.
\]

\item [(iii)] Suppose $m^*(j) =1$. Firstly, we claim that  $(E)_{\epsilon, j}$ admits no regular solution when $\bar{H} > \bar{H}_\epsilon^{cr}$. Suppose by contradiction that we have a regular solution $(u_\epsilon, m_\epsilon, \bar{H}_\epsilon)$ with $\bar{H}_\epsilon > \bar{H}_\epsilon^{cr}$ and $p\in \mathbb{R}$. 
Consequently, 
\begin{equation}\label{negative jump of m}
m_{\bar{H}_\epsilon}^-(x) < 1< m_{\bar{H}_\epsilon}^+(x), \quad
\inf_{x\in\mathbb{T}} ( m_{\bar{H}_\epsilon}^+(x) - m_{\bar{H}_\epsilon}^-(x) ) >0.
\end{equation}
Obviously,  the density has the following form
\begin{equation}
m_\epsilon(x) = m_{\bar{H}_\epsilon}^+(x)\  \chi_{E}(x) + m_{\bar{H}_\epsilon}^-(x)\  \chi_{\mathbb{T}\setminus E}(x), 
\end{equation} where $E$ is some subset of $\mathbb{T}$.
Moreover, we obtain
\[
\begin{split}
&1 = \int_{\mathbb{T}} m_\epsilon(x) \ dx = \int_{E} m_{\bar{H}_\epsilon}^+(x)\  dx + \int_{\mathbb{T}\setminus E} m_{\bar{H}_\epsilon}^-(x)\  dx, \\
& \int_{\mathbb{T}} m_{\bar{H}_\epsilon}^-(x)\  dx <   1
<  \int_{\mathbb{T}} m_{\bar{H}_\epsilon}^+(x)\  dx.
\end{split}
\]
Therefore, the Lebesgue measure of $E$ is in $(0,1)$. Hence, one can find $e\in \mathbb{T}$ such that
for any $\eta>0$, we have 
\[
(e-\eta, e) \cap E \neq \emptyset, \quad (e, e+\eta)  \cap (\mathbb{T} \setminus E) \neq \emptyset.
\]
Combining with \eqref{negative jump of m}, we get that $m_\epsilon(e^-) - m_\epsilon(e^+) <0$. So 
\[
(u_\epsilon)_x(e^-) - (u_\epsilon)_x(e^+)  = j \ \left( \frac{1}{m_\epsilon(e^-)} -  \frac{1}{m_\epsilon(e^+)}\right) <0,
\]which contradicts the regularity assumption of $u_\epsilon$.

Hence, in other words, if we want to find a solution of $(E)_{\epsilon, j}$, it is necessary to have $\bar{H}_\epsilon = \bar{H}_\epsilon^{cr} $.

Notice that $m_\epsilon(x)$ can switch from $m_{\bar{H}_\epsilon^{cr}}^+(x)$ to $m_{\bar{H}_\epsilon^{cr}}^-(x)$ if and only if the switch point is a continuity point, which implies $m_{\bar{H}_\epsilon^{cr}}^+(x)=m_{\bar{H}_\epsilon^{cr}}^-(x)$. This case can only happen when $V$ meets its maximum.
Since $x=0$ is the single maximum of $V$, one can have as the only possible candidate solution the piecewise function of the form below:
\begin{equation*}
m_\epsilon(x)=
\left\{\!\!\!\!\!
        \begin{array}{lcl}
         &m_{\bar{H}_\epsilon^{cr}}^-(x), & \text{ for } x\in [0,d_\epsilon),\\
         &m_{\bar{H}_\epsilon^{cr}}^+(x) , & \text{ for } x\in [d_\epsilon, 1) .\\
        \end{array}
        \right.
\end{equation*}
Our next step is to find an appropriate $d_\epsilon\in (0,1)$. In fact, let us define
\[
\varphi(d) := \int_0^1 m_\epsilon(x) dx = \int_0^d  m_{\bar{H}_\epsilon^{cr}}^-(x) dx + \int_d^1 m_{\bar{H}_\epsilon^{cr}}^+(x)  dx
\]which is differentiable in $(0,1)$. Due to $\varphi(0) >1, \varphi(1) <1$ and $\varphi'(d) = m_{\bar{H}_\epsilon^{cr}}^-(x)- m_{\bar{H}_\epsilon^{cr}}^+(x) <0$, there exists a unique $d_\epsilon\in(0,1)$ such that $\varphi(d_\epsilon) = 1$. Furthermore, we have
\[
u_\epsilon(x) = \int_0^x \frac{j}{m_\epsilon(y)} dy- p_j x \quad \text{ where } p_j = \int_0^1 \frac{j}{m_\epsilon(y)} dy. \qedhere
\]
\end{itemize}
\end{proof}

Moreover, we obtain:
\begin{lemma}\label{convergence for the decreasing g and nonvanishing j}
Fix $j\neq 0$. Suppose that $x=0$ is the single maximum of $V$. 
Assume that (a), (b) and (c) hold. Then 
\[
\lim_{\epsilon\rightarrow0}u_\epsilon(x) = 0, \quad
\lim_{\epsilon\rightarrow0}m_\epsilon(x) = 1, \quad 
\lim\limits_{\epsilon\rightarrow0}\bar{H}_\epsilon =\frac{j^2}{2}-g(1).
\]
\end{lemma}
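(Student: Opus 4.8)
The plan is to establish the three limits by leveraging the structure already exposed in the proof of Lemma~\ref{existence with non-vanishing j}, namely that in every case the density $m_\epsilon$ is built out of the two branches $m_{\bar H_\epsilon}^{\pm}$ (or $m_{\bar H_\epsilon^{cr}}^{\pm}$) of the equation $h(m)=\bar H_\epsilon - \epsilon V(x)$, and that $\bar H_\epsilon$ is pinned down by the normalization $\int_{\mathbb T} m_\epsilon = 1$. First I would treat the degenerate case $m^*(j)=1$, which is the cleanest: there $\bar H_\epsilon = \bar H_\epsilon^{cr} = \epsilon\max_{\mathbb T}V + h(m^*(j))$ exactly, so $\bar H_\epsilon \to h(1) = \tfrac{j^2}{2}-g(1)$ immediately, and by Step~1 of Lemma~\ref{existence with non-vanishing j} both $m_{\bar H_\epsilon^{cr}}^{\pm}(x)\to m^*(j)=1$ uniformly in $x$ (the convergence is uniform because $h(m_{\bar H_\epsilon^{cr}}^{\pm}(x)) - h(m^*(j)) = \epsilon(\max_{\mathbb T}V - V(x))$ is controlled uniformly by $\epsilon\,\mathrm{osc}(V)$, and $h$ is strictly convex with minimum at $m^*(j)$, so small values of $h - h(m^*)$ force $m$ close to $m^*$). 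Hence $m_\epsilon(x)\to 1$ uniformly regardless of where the switch point $d_\epsilon$ sits, and then $p_{j,\epsilon}=\int_0^1 j/m_\epsilon(y)\,dy \to j$ and $u_\epsilon(x)=\int_0^x\big(j/m_\epsilon(y) - p_{j,\epsilon}\big)\,dy \to 0$ uniformly.

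For the cases $m^*(j)>1$ and $m^*(j)<1$ the argument is parallel but requires controlling $\bar H_\epsilon$, which is no longer given by a closed formula. In case $m^*(j)>1$ we have $m_\epsilon = m_{\bar H_\epsilon}^-$ with $\bar H_\epsilon$ chosen so that $\int_0^1 m_{\bar H_\epsilon}^-(x)\,dx = 1$, and $\bar H_\epsilon \in (\bar H_\epsilon^{cr}, h(1))$ — the lower end because $\alpha^-_\epsilon(j)\to m^*(j)>1$ and $\bar H\mapsto \alpha^-_{\bar H}$ is strictly decreasing, the upper end because at $\bar H = h(1)$ the branch $m^-_{\bar H}$ would be $\le 1$ with strict inequality on a set of positive measure when $V$ is non-constant, hence integrate to something $<1$. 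Since $\bar H_\epsilon^{cr} = \epsilon\max_{\mathbb T}V + h(m^*(j)) \to h(m^*(j)) = h(1)$ (here we use $m^*(j)=1$ is NOT assumed, but $h(m^*(j))$ — wait, this needs care). Let me restate: in case $m^*(j)>1$ one does not have $h(m^*(j)) = h(1)$, so the squeeze $\bar H_\epsilon^{cr} \le \bar H_\epsilon \le h(1)$ is not tight. Instead I would argue directly: by monotonicity and continuity, $\bar H_\epsilon$ is the unique solution of $\Phi_\epsilon(\bar H):=\int_0^1 m_{\bar H}^-(x)\,dx = 1$; as $\epsilon\to 0^+$ the family $\Phi_\epsilon$ converges locally uniformly to $\Phi_0(\bar H) = h^{-1}_-(\bar H)$ (the left inverse branch of $h$, since $V$ contributes nothing in the limit), and $\Phi_0(\bar H)=1$ has the unique root $\bar H = h(1)$ because $1 < m^*(j)$ puts us on the decreasing branch $m^-$. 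A standard implicit-function / continuity-of-roots argument then gives $\bar H_\epsilon \to h(1) = \tfrac{j^2}{2}-g(1)$, whence $m_{\bar H_\epsilon}^-(x)\to m_{h(1)}^-(x) = 1$ uniformly, and $u_\epsilon\to 0$ as before. Case $m^*(j)<1$ is identical with $m^-$ replaced by $m^+$ throughout.

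The main obstacle is the case distinction and, within the cases $m^*(j)\ne 1$, making rigorous the "continuity of the root $\bar H_\epsilon$" step: one must verify that $m_{\bar H}^{\pm}(x)$ depends continuously (jointly) on $(\bar H,\epsilon,x)$ and is bounded on the relevant parameter range — for the upper branch this uses hypothesis (b), $h(m)\to+\infty$ as $m\to\infty$, to keep $m^+_{\bar H}$ bounded as $\bar H$ stays bounded, and for the lower branch hypothesis (c), $h(m)\to+\infty$ as $m\to 0^+$, to keep $m^-_{\bar H}$ bounded away from $0$ (so that $1/m_\epsilon$ in the formula for $u_\epsilon$ stays integrable). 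Once uniform two-sided bounds $0 < c \le m_\epsilon(x) \le C < \infty$ are in hand, the convergence $m_\epsilon \to 1$ passes to $u_\epsilon \to 0$ by dominated convergence in the explicit integral formula, and the convergence of $\bar H_\epsilon$ is read off from the first equation of $(E)_{\epsilon,j}$ evaluated at any point. I would also note for completeness that $\bar H_\epsilon \to \tfrac{j^2}{2}-g(1)$ can alternatively be obtained without the root-continuity argument by sandwiching: $\bar H_\epsilon^{cr} \le \bar H_\epsilon$ always, and an upper bound $\bar H_\epsilon \le h(1) + o(1)$ follows because otherwise $\int_0^1 m^-_{\bar H_\epsilon} < 1$ (resp. $\int_0^1 m^+_{\bar H_\epsilon} > 1$) for small $\epsilon$, contradicting the normalization; combined with $\bar H_\epsilon^{cr}\to h(m^*(j))$ this does require separating whether $h(m^*(j)) = h(1)$ — which holds exactly when $m^*(j)=1$ — so the root-continuity route is the honest one for $m^*(j)\ne 1$.
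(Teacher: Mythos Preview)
Your proposal is correct and follows essentially the same route as the paper: split into the three cases $m^*(j)>1$, $m^*(j)<1$, $m^*(j)=1$; in the last case use $\bar H_\epsilon = \bar H_\epsilon^{cr}$ together with Step~1 of Lemma~\ref{existence with non-vanishing j} to get $m_{\bar H_\epsilon^{cr}}^{\pm}\to 1$; in the first two cases use that the relevant branch $m^{\mp}_{\bar H}$ stays strictly on one side of $m^*(j)$ so that $h'\neq 0$ there, and deduce continuity of $(\bar H_\epsilon,m_\epsilon)$ in $\epsilon$. The only stylistic difference is that the paper packages the $m^*(j)\neq 1$ cases by directly invoking the implicit function theorem on $F(x,m,\epsilon,\bar H)=h(m)+\epsilon V(x)-\bar H$ exactly as in Proposition~\ref{increasingpositive}, whereas you phrase it as ``continuity of the root of $\Phi_\epsilon(\bar H)=\int m^{\pm}_{\bar H}\,dx=1$''; these are the same argument, and your boundedness remarks (using hypotheses (b) and (c)) are the ingredients that make that continuity rigorous.
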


\begin{proof}
We now show the continuity of the obtained solution $(u_\epsilon(x), m_\epsilon(x), \bar{H}_\epsilon)$ of $(E)_{\epsilon, j}$ with respect to $\epsilon$.
We will divide the proof into three cases according to the value of $m^*(j)$ as in the proof of Lemma~\ref{existence with non-vanishing j}.

(i). For the first two cases, we consider the function $F$ defined in \eqref{auxiliary function for implicit function theorem}. It is easy to see that
\begin{eqnarray*}
                   \frac{\partial F}{\partial m}(x, m, \epsilon, \bar{H}) = h'(m) = -\frac{j^2}{m^3}-g'(m) = 
                  \left\{
        \begin{array}{lcl}
             >0  \qquad 0<m<m^*\\
             <0   \qquad m>m^*
        \end{array}
        \right.
\end{eqnarray*} because $m^*$ is a minimum point of $h$.
Applying the same argument as in the proof of Proposition~\ref{increasingpositive} and the fact that $m^*(j)>1$ and $m^*(j)<1$  implies $m_\epsilon(x) = m_{\bar{H}_\epsilon}^-(x) < m^*(j)$ and $m_\epsilon(x) = m_{\bar{H}_\epsilon}^+(x) > m^*(j)$ respectively, one can obtain the continuity of $(u_\epsilon(x), m_\epsilon(x), \bar{H}_\epsilon)$ in $\epsilon$.

(ii). When $m^*(j)=1$, we notice that 
\[
m_\epsilon(x) = m_{\bar{H}_\epsilon^{cr}}^+(x)\  \chi_{[d_\epsilon, 1)}(x) + m_{\bar{H}_\epsilon^{cr}}^-(x)\  \chi_{[0, d_\epsilon)}(x), 
\]where $d_\epsilon\in (0,1)$ is a uniquely determined number.
Using the fact that $\lim\limits_{\epsilon \rightarrow 0^+} m_{\bar{H}_\epsilon^{cr}}^{\pm} = m^*(j)$ in the first step of the proof of Lemma~\ref{existence with non-vanishing j}, we have 
\[
\lim_{\epsilon\rightarrow0^+}m_\epsilon(x) = 1,
\]
which finishes the proof of the lemma.
\end{proof}

\subsection{$j=0$}
Besides that $g$ is strictly decreasing, we will assume furthermore that $h=-g$ is convex, which is consistent with the hypotheses in the case when $j\neq 0$ in the last section above.
 
Now, we consider the system $(E)_{\epsilon, 0}$ and obtain the lower bound of $\bar{H}$ there.
In fact, due to the inequality
\begin{equation}\label{inequality}
              -g( m(x) )=\bar{H}-\epsilon V(x)-\frac{(u_{x} + p)^2}{2} \leq \bar{H} - \epsilon V(x),
\end{equation}
we have
\begin{equation*}
            \bar{H} \geq \epsilon \max\limits_{\mathbb{T}}V(x)-g(0).
\end{equation*}
On the other hand, we integrate both sides of \eqref{inequality} over $\mathbb{T}$ and get
\begin{equation*}
            \bar{H}  \geq \epsilon\int\limits_{\mathbb{T}}V(x)\  dx - \int\limits_{\mathbb{T}}g(m(x))\  dx.
\end{equation*}
Thus, we obtain the relation between $\bar{H}$ and $m$
\begin{equation*}
         \bar{H}  \geq \max\left\{\epsilon\max_{\mathbb{T}} V(x)-g(0), -\int_{\mathbb{T}}g(m(x))\  dx +\epsilon\int_{\mathbb{T}}V(x) \ dx \right\} =: \bar{H}^{0}_m.
\end{equation*}
We assert that, when $\epsilon$ is small enough, $(E)_{\epsilon, 0}$ has a viscosity solution if and only if $\bar{H}=\bar{H}^0_m$.

\begin{lemma}\label{decreasing with vanishing j}
\begin{itemize}
\item [(i)] If $\bar{H} >\bar{H}^0_m$, $(E)_{\epsilon, 0}$ does not have any viscosity solution.
\item [(ii)] If $\bar{H} =\bar{H}^0_m$, there exists $\epsilon_0>0$ such that when $0<\epsilon < \epsilon_0$, $(E)_{\epsilon, 0}$ has  a unique viscosity solution.
\end{itemize}
\end{lemma}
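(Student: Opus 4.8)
The strategy is to exploit the eikonal structure of the first equation of $(E)_{\epsilon,0}$ together with the regularity condition (ii) in the definition of a regular solution. Writing the first equation as $\tfrac12(u_x+p)^2=g(m(x))+\bar H-\epsilon V(x)=:R(x)\ge0$ and using $m\ge0$, $m(u_x+p)=0$, $\int_{\mathbb T}m=1$, I would split $\mathbb T=E_+\cup E_0$ with $E_+=\{m>0\}$ and $E_0=\{m=0\}$. On $E_+$ one has $u_x+p=0$, hence $R=0$ and $m=g^{-1}(\epsilon V-\bar H)$; on $E_0$ one has $R=W:=g(0)+\bar H-\epsilon V$ and $(u_x+p)^2=2W$. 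Integrating the first equation over $\mathbb T$ and using $u_x+p=0$ on $E_+$ gives
\[
\int_{E_0}\tfrac12(u_x+p)^2\,dx=\bar H-\Big(\epsilon\!\int_{\mathbb T}\!V-\int_{\mathbb T}\!g(m)\Big),
\]
so the left side measures how much $\bar H$ exceeds the second of the two quantities defining $\bar H^0_m$; the first, $\epsilon\max_{\mathbb T}V-g(0)$, is exactly the pointwise bound $-g(m(x))\le\bar H-\epsilon V(x)$ already established above \eqref{inequality}.

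\emph{Part (i).} Suppose $\bar H>\bar H^0_m$. Then $\bar H>\epsilon\max_{\mathbb T}V-g(0)$, so $W>0$ on all of $\mathbb T$; and $\bar H>\epsilon\int_{\mathbb T}V-\int_{\mathbb T}g(m)$, so the identity above shows $E_0$ has positive measure, while $E_+\ne\emptyset$ since $\int m=1$. On a connected component $(a,b)$ of the interior of $E_0$, whose endpoints abut $E_+$ (where $u_x+p=0$), the regularity condition (ii) forces $u_x+p=-\sqrt{2W}<0$ just to the right of $a$ and $u_x+p=+\sqrt{2W}>0$ just to the left of $b$; hence $\psi:=u+px$ decreases near $a$, increases near $b$, and attains its minimum over $[a,b]$ at an interior point. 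But on $(a,b)\subset E_0$ the function $\psi$ is a viscosity supersolution of $\tfrac12\psi_x^2=W>0$, which admits no interior local minimum. This contradiction rules out any viscosity solution with $\bar H>\bar H^0_m$.

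\emph{Part (ii).} For existence I would produce the natural candidate $u\equiv0$ (so $p=0$), $m_\epsilon=g^{-1}(\epsilon V-\bar H_\epsilon)$, with $\bar H_\epsilon$ fixed by $\int_{\mathbb T}m_\epsilon=1$. Since $-g$ is convex and strictly increasing it is unbounded above, so $g^{-1}$ maps $(-\infty,g(0)]$ onto $[0,\infty)$ and $\bar H\mapsto\int_{\mathbb T}g^{-1}(\epsilon V-\bar H)$ is continuous, strictly increasing, equal to $\int_{\mathbb T}g^{-1}\!\big(g(0)-\epsilon(\max_{\mathbb T}V-V)\big)\to0$ as $\epsilon\to0^+$ at $\bar H=\epsilon\max_{\mathbb T}V-g(0)$, and $\to+\infty$ as $\bar H\to\infty$; hence for $\epsilon$ small there is a unique $\bar H_\epsilon>\epsilon\max_{\mathbb T}V-g(0)$ with $\int m_\epsilon=1$, making $m_\epsilon>0$ smooth, and with $u\equiv0$ one checks $\bar H_\epsilon=\epsilon\int_{\mathbb T}V-\int_{\mathbb T}g(m_\epsilon)=\bar H^0_{m_\epsilon}$, so $(0,m_\epsilon,\bar H_\epsilon)$ is a (smooth, hence viscosity) solution. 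For uniqueness, take any solution $(u,m,\bar H)$ with $\bar H=\bar H^0_m$: if $\bar H=\epsilon\int_{\mathbb T}V-\int_{\mathbb T}g(m)$, the identity above forces $u_x+p=0$ a.e., hence $p=0$ and $u\equiv0$ (after normalizing $u(0)=0$), whence $m=g^{-1}(\epsilon V-\bar H)$ and $\bar H=\bar H_\epsilon$; otherwise $\bar H>\epsilon\int_{\mathbb T}V-\int_{\mathbb T}g(m)$ forces $\bar H=\bar H^0_m=\epsilon\max_{\mathbb T}V-g(0)$, so on $E_+$ one has $m=g^{-1}\!\big(g(0)-\epsilon(\max_{\mathbb T}V-V)\big)\le g^{-1}\!\big(g(0)-\epsilon\,\mathrm{osc}_{\mathbb T}V\big)<1$ for $\epsilon$ small, contradicting $\int_{\mathbb T}m=1$. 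This gives the claimed $\epsilon_0$.

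\emph{Main obstacle.} Part (ii) is essentially bookkeeping around the critical value $\bar H^0_m$. The substantive point is (i): one must justify that $E_0$ genuinely decomposes into arcs whose closures meet $E_+$, so that the one-sided slopes of $u$ on $\partial E_0$ are pinned down by the regularity condition (ii). Granting this structural fact, the rigidity of one-dimensional viscosity supersolutions of $\tfrac12(u_x+p)^2=W$ with $W>0$ (no interior local minimum) closes the argument; making that structural fact precise is where the real work lies.
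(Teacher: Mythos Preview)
Your Part (ii) is essentially the paper's argument: Jensen's inequality with the concavity of $g$ forces $\bar H^0_m=\epsilon\int_{\mathbb T}V-\int_{\mathbb T}g(m)$ for small $\epsilon$, the integral identity then gives $u_x+p=0$ a.e., and the strict monotonicity of $\bar H\mapsto\int_{\mathbb T}g^{-1}(\epsilon V-\bar H)$ pins down the unique $\bar H_\epsilon$. Your extra case split in the uniqueness step (treating $\bar H=\epsilon\max_{\mathbb T}V-g(0)$ separately) is not needed once Jensen has identified which branch of the max is active, but it does no harm.

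For Part (i) you take a genuinely different route. The paper argues directly with jumps: it sets $Z=\{(u_\epsilon)_x+p\neq 0\}$, shows $Z=\{m_\epsilon=0\}$ has measure in $(0,1)$, then takes $e=\sup\{x\in Z\cap(0,1):(u_\epsilon)_x+p=-\sqrt{2W}\}$ (if nonempty) and observes that at $e$ the one-sided derivative must jump upward, contradicting condition (ii); if only the $+\sqrt{2W}$ branch occurs on $Z$, a transition from $0$ to $+\sqrt{2W}$ gives the same contradiction. Your argument instead isolates a connected component $(a,b)$ of $\operatorname{int}(E_0)$, uses condition (ii) at the endpoints to force opposite signs of $u_x+p$, and then invokes the ``no interior local minimum'' principle for viscosity supersolutions of $\tfrac12\psi_x^2=W>0$.

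The difference matters for exactly the obstacle you flag. The paper's supremum argument does not require $\operatorname{int}(E_0)\neq\emptyset$; it only needs the set where the negative branch is taken to be nonempty and bounded, and then looks at the transition at its supremum. Your argument, by contrast, needs an honest open subinterval of $E_0$ whose endpoints abut $E_+$. If $E_0$ were, say, a fat Cantor set, your Step 5 would have nothing to work with, whereas the paper's $e$ is still well-defined. So the paper's approach sidesteps precisely the structural decomposition you identify as the hard part. If you want to keep the viscosity-minimum idea, you could combine it with the paper's trick: rather than seeking a component of $\operatorname{int}(E_0)$, show that once the $-\sqrt{2W}$ branch is taken anywhere, condition (ii) prevents a switch back to $+\sqrt{2W}$ or to $0$ without an upward jump, which is already the contradiction --- the local-minimum step is then superfluous.
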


\begin{proof}
(i). We follow the argument in the proof of Proposition 5.3 in \cite{Gomes17}. We suppose by contradiction that $\bar{H} >\bar{H}^0_m$ and $(E)_{\epsilon, 0}$ has a viscosity solution $(u_\epsilon, m_\epsilon, \bar{H}_\epsilon)$. 

We first give a description of the set of points where the density $m_\epsilon$ vanishes.
In fact, we define a set 
\[
Z:= \{x\in\mathbb{T}~|~(u_\epsilon)_x(x)+p\neq0\} \subset \{x\in\mathbb{T}~|~m_\epsilon(x) = 0\}
\]where the inclusion is due to the second equation of $(E)_{\epsilon, 0}$.

By assumption, we have
\begin{equation*}
\begin{split}
         \bar{H}_\epsilon &= \int_{\mathbb{T}}\frac{( (u_\epsilon)_x(x)+p)^2}{2} \ dx - \int_{\mathbb{T}}g(m_\epsilon(x)) \ dx + \epsilon \int_{\mathbb{T}}V(x)\  dx \\
&>- \int_{\mathbb{T}}g(m_\epsilon (x) ) \ dx+\epsilon \int_{\mathbb{T}} V(x)\ dx,
\end{split}
\end{equation*}
which implies that $Z$ has positive Lebesgue measure. 

On the other hand, taking any $x\in \mathbb{T}$ such that $m_\epsilon(x) =0$, we have 
\[
\begin{split}
\frac{( (u_\epsilon)_x(x)+p)^2}{2} &= \bar{H}_\epsilon + g(0)  - \epsilon V(x)\\
&> \epsilon [ \max_{\mathbb{T}} V(x)  - V(x) ] >0,
\end{split}
\]
which shows that $Z= \{x\in\mathbb{T}~|~m_\epsilon(x) = 0\}$ with Lebesgue measure in $(0,1)$. 

Secondly, notice that $(u_\epsilon)_x + p$ takes either the value $\sqrt{2[\bar{H}_\epsilon-\epsilon V(x)+g(0)]} $ or $-\sqrt{2[\bar{H}_\epsilon-\epsilon V(x)+g(0)]} $ on $Z$. In the follwing, we want to show that these two cases are impossible. 

Suppose there is some point such that the latter case holds. Without loss of generality, we define
\[
         e:=\sup \left\{ x\in Z\cap (0,1) ~\big|~(u_\epsilon)_x(x) +p=-\sqrt{2[\bar{H}_\epsilon-\epsilon V(x)+g(0)]} \right\}
\]
then, at $x= e$, the jump of $(u_\epsilon)_x+p$ is of size $\sqrt{2(\bar{H}_\epsilon-\epsilon V(x)+g(0))}$ or $2\sqrt{2(\bar{H}_\epsilon-\epsilon V(x)+g(0))}$ at $x=e$, which is a contradiction to the definition of semi-concavity.

Suppose that $(u_\epsilon)_x+p$ can only take the value of $\sqrt{2(\bar{H}_\epsilon-\epsilon V(x)+g(0))}$ or $0$ on $\mathbb{T}$. Hence, there must be a point $x\in\mathbb{T}$ such that $(u_\epsilon)_x+p$ changes from $0$ to $\sqrt{2(\bar{H}_\epsilon-\epsilon V(x)+g(0))}$, which is also a contradiction. Consequently, when $\bar{H} >\bar{H}^0_m$, $(E)_{\epsilon, 0}$ does not have a semi-concave solution.

(ii). Since $g$ is concave and $\int_{\mathbb{T}} m(x) dx =1$, by Jensen's inequality, we obtain
\[
\int_{\mathbb{T}}g(m(x)) \ dx \leq g\left(\int_{\mathbb{T}} m(x) dx\right) = g(1).
\]
Consequently, since $g$ is strictly decreasing, there exists $\epsilon_0>0$ such that for every $0\leq\epsilon<\epsilon_0$, we have
\[
         \epsilon [\max_{\mathbb{T}} V(x) - V(x)] < g(0)  -g(1) .
\]
Hence, integrating over $\mathbb{T}$ for the above inequality and combining Jensen's inequality,  we get
\[
\begin{split}
 \epsilon \max_{\mathbb{T}} V(x) -  \epsilon \int_{\mathbb{T}} V(x) dx  < g(0)  -g(1) \leq  g(0) - \int_{\mathbb{T}}g(m(x)) dx,
\end{split}
\]which implies
\[
\bar{H}^{0}_m = -\int_{\mathbb{T}}g(m(x))\  dx +\epsilon\int_{\mathbb{T}}V(x) \ dx .
\]

Now we come to show the existence of  solution to $(E)_{\epsilon, 0}$ satisfying $\bar{H} =\bar{H}^{0}_m$.
From $(E)_{\epsilon, 0}$, we have
\[
\begin{split}
 \bar{H} &= \int_{\mathbb{T}}\frac{( u_{x} + p)^2}{2} dx + \epsilon \int_{\mathbb{T}} V(x) dx -  \int_{\mathbb{T}}g(m_{}(x)) dx \\
&= -\int_{\mathbb{T}}g(m(x))\  dx +\epsilon\int_{\mathbb{T}}V(x) \ dx.
\end{split}
\]
Thus, $u_x+p=0$ holds almost everywehere, and then we obatin that $p=0, u(x)=0$.
Hence, since $g$ is strictly decreasing, 
\[
         \epsilon V(x) = g(m(x))+\bar{H} \Longrightarrow m(x) = g^{-1}(\epsilon V(x)-\bar{H}) .
\]
Therefore, in order to find the solution of $(E)_{\epsilon, 0}$, we need to find $\bar{H}$ such that $\int_{\mathbb{T}}g^{-1}(\epsilon V(x) - \bar{H})dx =1$ holds.
In fact, we note that  the map $\bar{H} \mapsto \int_{\mathbb{T}} g^{-1}(\epsilon V(x)-\bar{H}) dx$ is strictly increasing, so there exists a unique $\bar{H}_*$ such that 
$\int_{\mathbb{T}}g^{-1}(\epsilon V(x) - \bar{H})dx =1$ holds.

Therefore, $(E)_{\epsilon, 0}$ has  a unique solution 
\[
(u_\epsilon(x),m_\epsilon(x) , \bar{H}_\epsilon) = (0, g^{-1}(\epsilon V(x)-\bar{H}_*) , \bar{H}_*). \qedhere
\] 
\end{proof}

As a conclusion, we obtain
\begin{lemma}
Assume that $j = 0$ and $-g$ is convex. Then 
\[
\lim_{\epsilon\rightarrow0}u_\epsilon(x) = 0, \quad
\lim_{\epsilon\rightarrow0}m_\epsilon(x) = 1, \quad \lim\limits_{\epsilon\rightarrow0}\bar{H}_\epsilon = -g(1).
\]
\end{lemma}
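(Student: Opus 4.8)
The plan is to leverage Lemma~\ref{decreasing with vanishing j}, which has already pinned down the structure of the solution: for all sufficiently small $\epsilon$ there is a unique viscosity solution of $(E)_{\epsilon,0}$, and it has the explicit form $(u_\epsilon, m_\epsilon, \bar H_\epsilon) = (0,\, g^{-1}(\epsilon V(x) - \bar H_\epsilon),\, \bar H_\epsilon)$, where $\bar H_\epsilon$ is the unique real number for which $\int_{\mathbb{T}} g^{-1}(\epsilon V(x) - \bar H_\epsilon)\, dx = 1$. Since $u_\epsilon \equiv 0$ for every such $\epsilon$, the limit $\lim_{\epsilon\to0} u_\epsilon(x) = 0$ is immediate, and the whole matter reduces to understanding the dependence of $\bar H_\epsilon$ on $\epsilon$ and then reading off $m_\epsilon$.

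First I would make the dependence $\epsilon \mapsto \bar H_\epsilon$ rigorous via the implicit function theorem, exactly as in the proof of Proposition~\ref{increasingpositive}. Define $G(\epsilon, \bar H) := \int_{\mathbb{T}} g^{-1}(\epsilon V(x) - \bar H)\, dx - 1$, which is smooth on a neighborhood of $(0, -g(1))$ because $g$ is a smooth strictly decreasing function, so $g^{-1}$ is smooth on the open interval $g(\mathbb{R}^+)$ and the argument $\epsilon V(x) - \bar H$ stays inside that interval for $(\epsilon,\bar H)$ near $(0,-g(1))$ by the a priori bounds from Lemma~\ref{decreasing with vanishing j}. One checks that $G(0, -g(1)) = g^{-1}(g(1)) - 1 = 0$ and that $\partial_{\bar H} G(\epsilon, \bar H) = -\int_{\mathbb{T}} (g^{-1})'(\epsilon V(x) - \bar H)\, dx > 0$, since $(g^{-1})' < 0$. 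Hence there is a $C^\infty$ (in particular continuous) function $\varphi$ with $\varphi(0) = -g(1)$ and $G(\epsilon, \varphi(\epsilon)) = 0$; by the uniqueness of $\bar H_\epsilon$ established in Lemma~\ref{decreasing with vanishing j}, $\bar H_\epsilon = \varphi(\epsilon)$ for all small $\epsilon$, so $\lim_{\epsilon\to0}\bar H_\epsilon = -g(1) = \tfrac{j^2}{2} - g(1)$ (recall $j=0$).

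Finally, substituting back, $m_\epsilon(x) = g^{-1}(\epsilon V(x) - \bar H_\epsilon)$ converges, uniformly in $x \in \mathbb{T}$, to $g^{-1}\big(0 - (-g(1))\big) = g^{-1}(g(1)) = 1$ as $\epsilon \to 0^+$, using the continuity of $g^{-1}$, the boundedness of $V$, and the convergence $\bar H_\epsilon \to -g(1)$ just obtained. This yields all three limits simultaneously. I do not expect a genuine obstacle: the only point needing a little care is checking that the argument $\epsilon V(x) - \bar H_\epsilon$ of $g^{-1}$ stays in the open range of $g$ uniformly in $x$ for small $\epsilon$, so that $g^{-1}$ and its derivatives are available; this is guaranteed by the control on $\bar H_\epsilon$ coming from Lemma~\ref{decreasing with vanishing j}, and the convexity of $-g$ is not needed for this qualitative statement — it entered only in identifying $\bar H^0_m$ within that lemma.
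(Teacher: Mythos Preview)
Your proposal is correct and matches the paper's intended approach. The paper actually states this lemma without any proof, writing only ``As a conclusion, we obtain'' after Lemma~\ref{decreasing with vanishing j}; your argument spells out exactly the missing details by combining the explicit solution formula from that lemma with the implicit-function-theorem continuity argument already used in Proposition~\ref{increasingpositive}.
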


\section{The convergence rate with vanishing potential}\label{convergence rate}
In this section, we will obtain the convergence rate with respect to the parameter $\epsilon$ according to whether $j$ vanishes or not.
The reason is that, based on the analysis of the above sections, we note that when $j=0$, the proof of convergence rate is the same.
When $j\neq 0$, the proof of convergence rate  is divided into the cases when $g$ is strictly  increasing and decreasing.

Firstly, we have
\begin{lemma}
When $j=0$, for any strictly increasing function $g$ (or strictly decreasing  and concave function $g$), there exists an $\epsilon_0>0$, such that when $0<\epsilon<\epsilon_0$, the solution of  $(m_\epsilon, u_\epsilon, \bar{H}_\epsilon)$ of $(E)_{\epsilon, 0}$ has the following estimates:
\begin{align}
&|\bar{H}_\epsilon-\bar{H}_0|=|\epsilon V(x_0)|\leq \left(\max_\mathbb{T} |V| \right) \ \epsilon,\\
&|m_\epsilon-1|\leq \frac{2 (\max_\mathbb{T} V- \min_\mathbb{T} V)}{|g'(1)|} \ \epsilon, \\
& u_\epsilon=u_0=0. 
\end{align}
\end{lemma}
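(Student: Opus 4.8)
The plan is to handle the three estimates separately, using that for $0<\epsilon<\epsilon_0$ the (unique) solution of $(E)_{\epsilon,0}$ has already been shown --- in Proposition~\ref{increasing with vanishing j} for strictly increasing $g$, and in Lemma~\ref{decreasing with vanishing j} for strictly decreasing concave $g$ --- to have the explicit form
\[
u_\epsilon\equiv 0,\qquad p=0,\qquad m_\epsilon(x)=g^{-1}\bigl(\epsilon V(x)-\bar{H}_\epsilon\bigr).
\]
In particular the third estimate $u_\epsilon=u_0=0$ is nothing but a restatement of those results and requires no further argument.

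For the estimate on $\bar{H}_\epsilon$, I would first pick a point $x_0=x_0(\epsilon)\in\mathbb{T}$ with $m_\epsilon(x_0)=1$; such a point exists since $m_\epsilon$ is continuous on $\mathbb{T}$ and $\int_{\mathbb{T}}m_\epsilon\,dx=1$, so by the intermediate value theorem $m_\epsilon$ attains the value $1$. Substituting $x=x_0$, $u_\epsilon\equiv 0$ and $p=0$ into the first equation of $(E)_{\epsilon,0}$ gives $\epsilon V(x_0)=g(1)+\bar{H}_\epsilon$, i.e. $\bar{H}_\epsilon=\epsilon V(x_0)-g(1)$. Since $\bar{H}_0=\tfrac{j^2}{2}-g(1)=-g(1)$, this yields $\bar{H}_\epsilon-\bar{H}_0=\epsilon V(x_0)$ and hence $|\bar{H}_\epsilon-\bar{H}_0|=|\epsilon V(x_0)|\le \bigl(\max_{\mathbb{T}}|V|\bigr)\,\epsilon$.

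For the estimate on $m_\epsilon$, I would rewrite the first equation of $(E)_{\epsilon,0}$ as
\[
g(m_\epsilon(x))-g(1)=\epsilon V(x)-\bar{H}_\epsilon-g(1)=\epsilon\bigl(V(x)-V(x_0)\bigr),
\]
and apply the mean value theorem to $g$ on the interval with endpoints $1$ and $m_\epsilon(x)$, obtaining
\[
m_\epsilon(x)-1=\frac{\epsilon\bigl(V(x)-V(x_0)\bigr)}{g'(\xi_x)}
\]
for some $\xi_x$ between $1$ and $m_\epsilon(x)$. The numerator is bounded by $\epsilon(\max_{\mathbb{T}}V-\min_{\mathbb{T}}V)$. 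For the denominator, the explicit formula for $m_\epsilon$ together with the local Lipschitz continuity of $g^{-1}$ near $g(1)$ (valid because $g'(1)\neq 0$) shows that $m_\epsilon\to 1$ uniformly on $\mathbb{T}$ as $\epsilon\to 0^+$; hence, after possibly shrinking $\epsilon_0$, $\xi_x$ lies in a fixed neighbourhood of $1$ on which $|g'|\ge\tfrac{1}{2}|g'(1)|$ by continuity of $g'$. Combining the two bounds gives $|m_\epsilon(x)-1|\le \dfrac{2(\max_{\mathbb{T}}V-\min_{\mathbb{T}}V)}{|g'(1)|}\,\epsilon$ uniformly in $x\in\mathbb{T}$, which is exactly the asserted constant.

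The only step that needs a little care is this last uniform lower bound on $|g'(\xi_x)|$: it requires upgrading the pointwise convergence $m_\epsilon(x)\to 1$ recorded in the previous lemmas to uniform convergence on $\mathbb{T}$, which however is immediate from the representation $m_\epsilon(x)=g^{-1}\bigl(g(1)+\epsilon(V(x)-V(x_0))\bigr)$ and the fact that $g^{-1}$ is Lipschitz near $g(1)$. All remaining steps are elementary, and the argument is identical in the increasing and the decreasing-concave cases since in both the solution has the same explicit form.
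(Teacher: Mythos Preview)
Your proposal is correct and follows essentially the same approach as the paper: both pick $x_0$ with $m_\epsilon(x_0)=1$ via the mean-value property of the integral, deduce $\bar{H}_\epsilon-\bar{H}_0=\epsilon V(x_0)$, and then control $|m_\epsilon(x)-1|$ through $|g(m_\epsilon(x))-g(1)|=\epsilon|V(x)-V(x_0)|$. The only cosmetic difference is that you invoke the mean value theorem to produce $g'(\xi_x)$, whereas the paper bounds the difference quotient $\frac{g(m_\epsilon(x))-g(1)}{m_\epsilon(x)-1}$ directly from the definition of $g'(1)$; both routes use uniform convergence $m_\epsilon\to1$ and continuity of $g'$ near $1$ to get the factor $\tfrac{1}{2}|g'(1)|$, so the arguments are interchangeable.
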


\begin{proof}
(i).  When $j=0$, due to Proposition~\ref{increasing with vanishing j} and Lemma~\ref{decreasing with vanishing j}, we know that there exists an $\epsilon_0>0$ such that when $0<\epsilon<\epsilon_0$, we have
\begin{equation*}
1=\int_\mathbb{T}g^{-1}(\epsilon V(x)-\bar{H}_\epsilon)dx.
\end{equation*}
Due to the continuity of $g^{-1}$, there exists $x_0\in \mathbb{T}$ such that
\[
g(1)=\epsilon V(x_0)-\bar{H}_\epsilon.
\]
Thus, we have
\[
|\bar{H}_\epsilon-\bar{H}_0|=|\bar{H}_\epsilon-(-g(1))|=|\epsilon V(x_0)|\leq \epsilon \max_\mathbb{T} |V(x)|.
\]

(ii). On the one hand, we note that 
\[
g(m_\epsilon(x))=\epsilon V(x)-\bar{H}_\epsilon=\epsilon V(x)+g(1)-\epsilon V(x_0)
\] and so 
\[
|g(m_\epsilon(x))-g(1)|=\epsilon |V(x)-V(x_0)|\leq (\max_\mathbb{T} V-\min_\mathbb{T} V)\epsilon .
\]
On the other hand, since $g$ is strictly increasing or decreasing,  for any $|\frac{g'(1)}{2}| \geq\eta>0$, there exists $\delta>0$ such that for any $0<|m_\epsilon(x) -1| <\delta$, we have
\[
\left|\frac{g(m_\epsilon(x))-g(1)}{m_\epsilon(x)-1} - g'(1) \right| < \eta.
\]
Therefore,  we obtain
\[
\left|\frac{g(m_\epsilon(x))-g(1)}{m_\epsilon(x)-1} \right| > |g'(1)| -\eta \geq \frac{|g'(1)|}{2}.
\]
Hence, 
\[
 |m_\epsilon(x)-1|\leq \frac{2 (\max_\mathbb{T} V- \min_\mathbb{T} V)}{|g'(1)|} \ \epsilon.
\]

(iii). It is obvious that  $u_\epsilon =u_0 =0$.
\end{proof}

Now we come to the cases when $j\neq 0$.
\begin{lemma}\label{convergence rate for increasing g}
When $j\neq 0$ and $\epsilon$ is small enough, for any given strictly increasing function $g$, the solution $(m_\epsilon,u_{\epsilon},\bar{H}_\epsilon)$ of $(E)_{\epsilon, j}$ has the following estimates:
\begin{align}
&|\bar{H}_\epsilon-\bar{H}_0|\leq\max_\mathbb{T} |V| \ \epsilon, \\
& |m_\epsilon(x)-1|\leq \frac{2 (\max_\mathbb{T} V- \min_\mathbb{T} V)}{|h'(1)|} \ \epsilon ,\\
&|u_{\epsilon}(x) -0|\leq  8j
\frac{\max_\mathbb{T}V - \min_\mathbb{T}V}{|h'(1)|}\epsilon.
\end{align}
\end{lemma}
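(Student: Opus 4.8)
The plan is to exploit the scalar reformulation $h(m_\epsilon(x)) = \bar H_\epsilon - \epsilon V(x)$ of the first equation of $(E)_{\epsilon,j}$, which is available because $u_x+p = j/m$ turns the Hamilton--Jacobi equation into this algebraic identity, and because $h'(m) = -g'(m) - j^2/m^3 < 0$ is strictly negative, so that $h$ is a global diffeomorphism onto its image. Everything then reduces to a first-order Taylor expansion of $h$ near $m=1$, combined with the uniform convergence $m_\epsilon \to 1$ already established in Proposition~\ref{increasingpositive}.

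First I would pin down $\bar H_\epsilon$. Since $m_\epsilon$ is continuous on $\mathbb T$ with $\int_{\mathbb T} m_\epsilon\,dx = 1$, there is a point $x_0 = x_0(\epsilon)$ with $m_\epsilon(x_0) = 1$; evaluating the reformulated equation there gives $\bar H_\epsilon = h(1) + \epsilon V(x_0) = \bar H_0 + \epsilon V(x_0)$, whence $|\bar H_\epsilon - \bar H_0| \le \epsilon \max_{\mathbb T} |V|$. Substituting back, $h(m_\epsilon(x)) - h(1) = \epsilon (V(x_0) - V(x))$, so $|h(m_\epsilon(x)) - h(1)| \le (\max_{\mathbb T} V - \min_{\mathbb T} V)\,\epsilon$ for every $x$.

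Next I would convert this into a bound on $|m_\epsilon(x) - 1|$ by the difference-quotient argument already used in the $j=0$ lemma: fix $\eta = |h'(1)|/2$, pick $\delta > 0$ with $\bigl|\frac{h(m)-h(1)}{m-1} - h'(1)\bigr| < \eta$ for $0 < |m-1| < \delta$, and use the uniform convergence $m_\epsilon \to 1$ to guarantee $\|m_\epsilon - 1\|_\infty < \delta$ for $\epsilon$ small; then $\bigl|\frac{h(m_\epsilon(x))-h(1)}{m_\epsilon(x)-1}\bigr| \ge |h'(1)|/2$, so $|m_\epsilon(x) - 1| \le \frac{2}{|h'(1)|}|h(m_\epsilon(x)) - h(1)| \le \frac{2(\max_{\mathbb T} V - \min_{\mathbb T} V)}{|h'(1)|}\,\epsilon$.

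Finally, for $u_\epsilon$ I would use $u_\epsilon(x) = \int_0^x \frac{j}{m_\epsilon(y)}\,dy - x\int_0^1 \frac{j}{m_\epsilon(y)}\,dy$ (from $u_x+p = j/m$, the normalization $u_\epsilon(0)=0$, and $\int_{\mathbb T} (u_\epsilon)_x\,dx = 0$ on the torus), and observe that the constant part $j$ cancels in this expression, leaving $u_\epsilon(x) = \int_0^x j\frac{1-m_\epsilon(y)}{m_\epsilon(y)}\,dy - x\int_0^1 j\frac{1-m_\epsilon(y)}{m_\epsilon(y)}\,dy$. Since $m_\epsilon \ge 1/2$ for $\epsilon$ small, this gives $|u_\epsilon(x)| \le 4|j|\,\|m_\epsilon - 1\|_\infty$, and inserting the Step~2 bound (with $|j| = j$ under the WLOG $j>0$) yields the claimed $8j\frac{\max_{\mathbb T} V - \min_{\mathbb T} V}{|h'(1)|}\,\epsilon$. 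The only genuinely delicate point is making sure all the smallness requirements on $\epsilon$ — enough for $\|m_\epsilon - 1\|_\infty < \delta$, for $m_\epsilon \ge 1/2$, and for the earlier existence statement — hold simultaneously, which is immediate from the uniform convergence $m_\epsilon \to 1$; the remaining computations are elementary.
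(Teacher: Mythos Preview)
Your proposal is correct and follows essentially the same approach as the paper: both use the intermediate value theorem to locate $x_0$ with $m_\epsilon(x_0)=1$, the identical difference-quotient argument around $h'(1)$ for the $m_\epsilon$ bound, and the same $1/m_\epsilon \le 2$ estimate for $u_\epsilon$. The only cosmetic difference is in part~(iii), where the paper applies the mean value theorem to write $p_\epsilon = j/m_\epsilon(\beta_\epsilon)$ before estimating, whereas you subtract the constant $j$ directly in the two integrals; both routes give the same $8j\frac{\max_{\mathbb T}V-\min_{\mathbb T}V}{|h'(1)|}\epsilon$ bound.
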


\begin{proof}
(i). We first note that $m_\epsilon(x) = h^{-1} (\bar{H}_\epsilon-\epsilon V(x))$ since $g$ is strictly increasing.
Due to the fact that $\int_\mathbb{T}m_\epsilon(x) dx =1$ and that $h^{-1}$ is continuous, there exists $x_0\in\mathbb{T}$, such that $m_\epsilon(x_0) = h^{-1}(\bar{H}_\epsilon - \epsilon V(x_0)) = 1$, that is 
\[
\bar{H}_\epsilon-\epsilon V(x_0)=h(1)=\frac{j^2}{2}-g(1)=\bar{H}_0.
\]
Hence, we obtain
\[
|\bar{H}_\epsilon-\bar{H}_0| = |\epsilon V(x_0)| \leq \max_\mathbb{T}|V|\ \epsilon.
\]

(ii). Moreover, from (i), we  have 
\[
|h(m_\epsilon(x))-h(m_\epsilon(x_0))|=\epsilon |V(x)-V(x_0) | \leq(\max_TV-\min_TV) \ \epsilon.
\]
Since $h$ is strictly decreasing and $m_\epsilon(x)$ converges uniformly to 1 in $x$, for any $|\frac{h'(1)}{2}| \geq\eta>0$, there exists $\delta>0$ such that for any $0<|m_\epsilon(x) -1| <\delta$, we have
\[
\left|\frac{h(m_\epsilon(x))-h(1)}{m_\epsilon(x)-1} - h'(1) \right| < \eta.
\]
Thus,
\[
 |m_\epsilon(x)-1|\leq \frac{2 (\max_\mathbb{T} V- \min_\mathbb{T} V)}{|h'(1)|} \ \epsilon.
\]

(iii). Due to Proposition~\ref{increasingpositive}, we know
\[
u_{\epsilon}(x)= \int_{0}^x \frac{j}{m_\epsilon(y)} dy - p_\epsilon x ,
\]
where $p_\epsilon=\int_T\frac{j}{m_\epsilon(x)} dx$.
Since $m_\epsilon$ is continuous, $p_\epsilon=\frac{j}{m_\epsilon (\beta_\epsilon)}$ for some $\beta_\epsilon\in \mathbb{T}$.

Note that $m_\epsilon(x)>0$ is continuous and we get $\min_\mathbb{T} m_\epsilon >0$. Therefore,
\[
\begin{split}
|u_{\epsilon}(x) - 0| &= \left| \int_0^x \bigg[\frac{j}{m_\epsilon(y)}  - \frac{j}{m_\epsilon(\beta_\epsilon)} \bigg] dy \right| 
 \leq j \int_0^x \left|\frac{m_\epsilon(\beta_\epsilon)-m_\epsilon(y) }{m_\epsilon(y) m_\epsilon(\beta_\epsilon)} \right| dy \\
&\leq j \int_0^x \left( \left|\frac{m_\epsilon(\beta_\epsilon)-1}{m_\epsilon(y) m_\epsilon(\beta_\epsilon)} \right|+\left|\frac{m_\epsilon(y)-1}{m_\epsilon(y)  m_\epsilon (\beta_\epsilon)}\right| \right) dy \\
&\leq \frac{2j}{(\min_{\mathbb{T}} m_\epsilon)^2 }
\frac{\max_\mathbb{T}V - \min_\mathbb{T}V}{|h'(1)|}\epsilon\\
&\leq 8j \frac{\max_\mathbb{T}V - \min_\mathbb{T}V}{|h'(1)|}\epsilon.
\end{split}
\]
The last inequality holds when $\epsilon$ is small enough since $m_\epsilon(x)$ converges uniformly to 1 in $x$.
\end{proof}

To end this section, we will deal with the case when $g$ is strictly decreasing and $j\neq 0$. The idea of the proof is quite different from the above cases when $m^*(j) =1$.
\begin{lemma}
Assume that $x=0$ is the single maximum of $V$, $g$ is strictly decreasing and that (a), (b) and (c) hold. When $j\neq 0$, the solution $(m_\epsilon,u_{\epsilon},\bar{H}_\epsilon)$ of $(E)_{\epsilon, j}$ has the following estimates:
\begin{itemize}
\item [\textbf{Case 1.}] When $m^*(j)\neq1$, we have
\begin{align}
&|\bar{H}_\epsilon-\bar{H}_0|\leq\max_\mathbb{T} |V| \ \epsilon, \\
& |m_\epsilon(x)-1|\leq \frac{2 (\max_\mathbb{T} V- \min_\mathbb{T} V)}{|h'(1)|} \ \epsilon ,\\
&|u_{\epsilon}(x) -0|\leq  8j 
\frac{\max_\mathbb{T}V - \min_\mathbb{T}V}{|h'(1)|}\epsilon.
\end{align}

\item [\textbf{Case 2.}]  When $m^*(j)=1$, we have
\begin{align}
&|\bar{H}_\epsilon-\bar{H}_0|\leq\max_\mathbb{T} |V| \ \epsilon, \\
& |m_\epsilon(x)-1|\leq \frac{2 \sqrt{\max_\mathbb{T} V - \min_\mathbb{T} V}}{\sqrt{h''(1)}} \sqrt{\epsilon} ,\\
&|u_{\epsilon}(x) -0|\leq 16 j \frac{ \sqrt{\max_\mathbb{T} V - \min_\mathbb{T} V}}{\sqrt{h''(1)}} \sqrt{\epsilon}.
\end{align}
\end{itemize}
\end{lemma}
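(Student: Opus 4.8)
The plan is to treat the two cases separately, since the rate degrades from $\epsilon$ to $\sqrt\epsilon$ precisely when the two branches $m_{\bar H}^{\pm}$ merge at $x=0$. In \textbf{Case 1} ($m^*(j)\ne1$), the solution lives entirely on a single branch ($m_\epsilon = m_{\bar H_\epsilon}^-$ if $m^*(j)>1$, or $m_{\bar H_\epsilon}^+$ if $m^*(j)<1$), on which $h$ is \emph{strictly monotone} with $h'$ bounded away from $0$ near $m=1$; hence the argument is formally identical to Lemma~\ref{convergence rate for increasing g}. Concretely, I would first write $m_\epsilon(x) = h^{-1}\big(\bar H_\epsilon - \epsilon V(x)\big)$ using the appropriate monotone branch of $h^{-1}$; since $\int_{\mathbb T} m_\epsilon = 1$ and $m_\epsilon$ is continuous, there is $x_0$ with $m_\epsilon(x_0)=1$, i.e. $\bar H_\epsilon - \epsilon V(x_0) = h(1) = \frac{j^2}{2}-g(1) = \bar H_0$, which gives $|\bar H_\epsilon-\bar H_0|\le \max_{\mathbb T}|V|\,\epsilon$. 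Then $|h(m_\epsilon(x))-h(1)| = \epsilon|V(x)-V(x_0)|\le(\max V-\min V)\epsilon$, and inverting via the lower bound $|h'|\ge|h'(1)|/2$ near $1$ (valid for $\epsilon$ small because $m_\epsilon\to1$ uniformly) yields the $m_\epsilon$-estimate. Finally, the $u_\epsilon$ bound comes from $u_\epsilon(x) = \int_0^x\!\big(\frac{j}{m_\epsilon(y)}-p_\epsilon\big)dy$ with $p_\epsilon = j/m_\epsilon(\beta_\epsilon)$ for some $\beta_\epsilon$, and the same triangle-inequality chain $\big|\frac{j}{m_\epsilon(y)}-\frac{j}{m_\epsilon(\beta_\epsilon)}\big|\le j\big(\big|\frac{m_\epsilon(\beta_\epsilon)-1}{m_\epsilon(y)m_\epsilon(\beta_\epsilon)}\big|+\big|\frac{m_\epsilon(y)-1}{m_\epsilon(y)m_\epsilon(\beta_\epsilon)}\big|\big)$, bounded by $8j\frac{\max V-\min V}{|h'(1)|}\epsilon$ once $\min_{\mathbb T}m_\epsilon\ge1/2$.

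In \textbf{Case 2} ($m^*(j)=1$), we are at the critical regime: $\bar H_\epsilon = \bar H_\epsilon^{cr} = \epsilon\max_{\mathbb T}V + h(m^*(j)) = \epsilon\max_{\mathbb T}V + h(1) = \epsilon V(0)+\bar H_0$, so $|\bar H_\epsilon-\bar H_0| = \epsilon|V(0)|\le\max_{\mathbb T}|V|\,\epsilon$ immediately. For $m_\epsilon$: by \eqref{transformed equation} with $\bar H=\bar H_\epsilon^{cr}$ one has $h(m_{\bar H_\epsilon^{cr}}^{\pm}(x)) - h(1) = \epsilon\big(\max_{\mathbb T}V - V(x)\big)\in[0,(\max V-\min V)\epsilon]$. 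Now since $m^*(j)=1$ is the minimum of the strictly convex $h$, $h'(1)=0$ and $h(m)-h(1)\approx\frac{h''(1)}{2}(m-1)^2$ near $m=1$; more precisely, for $\epsilon$ small the relevant $m$ lie in a neighborhood of $1$ where $h(m)-h(1)\ge\frac{h''(1)}{4}(m-1)^2$ (say), which gives $|m_{\bar H_\epsilon^{cr}}^{\pm}(x)-1|\le\frac{2\sqrt{\max V-\min V}}{\sqrt{h''(1)}}\sqrt\epsilon$, and hence the same bound for $m_\epsilon(x) = m_{\bar H_\epsilon^{cr}}^+(x)\chi_{[d_\epsilon,1)}+m_{\bar H_\epsilon^{cr}}^-(x)\chi_{[0,d_\epsilon)}$. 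For $u_\epsilon$, one runs the same triangle-inequality argument as in Case 1 but with $|m_\epsilon-1|\lesssim\sqrt\epsilon$, producing the $16j\frac{\sqrt{\max V-\min V}}{\sqrt{h''(1)}}\sqrt\epsilon$ bound (the constant is a touch larger to absorb $\min_{\mathbb T}m_\epsilon\ge1/2$ and the jump at $d_\epsilon$).

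I would lay the proof out as: (1) dispose of $\bar H_\epsilon$ in both cases using the characterizations already established in Lemma~\ref{existence with non-vanishing j} (Case 1 via the point $x_0$ where $m_\epsilon=1$; Case 2 via $\bar H_\epsilon=\bar H_\epsilon^{cr}$ directly); (2) prove the $m_\epsilon$-estimate by a local inversion of $h$ — linear in Case 1, quadratic in Case 2; (3) deduce the $u_\epsilon$-estimate mechanically. The honest technical point — and the only place the convexity hypotheses (a) are really used quantitatively — is step (2) of Case 2: one must pass from $h(m)-h(1)=O(\epsilon)$ to $|m-1|=O(\sqrt\epsilon)$, which requires controlling $h$ from below by a genuine quadratic near its minimum. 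By Taylor's theorem and $h''(1)>0$ (which follows since $1=m^*(j)$ is a nondegenerate minimum — $h''(m^*)\ge0$ with equality excluded by strict convexity, or at worst one argues with a uniform quadratic lower bound on a fixed neighborhood), this holds on a fixed neighborhood $(1-\delta_0,1+\delta_0)$, and one then checks that $|m_\epsilon(x)-1|<\delta_0$ for $\epsilon$ small using $m_\epsilon\to1$ uniformly (Lemma~\ref{convergence for the decreasing g and nonvanishing j}). The rest is bookkeeping of constants, which I would not grind through in detail.
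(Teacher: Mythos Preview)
Your proposal is correct and mirrors the paper's proof essentially line for line: Case~1 is disposed of by reference to the argument of Lemma~\ref{convergence rate for increasing g} (single monotone branch of $h$, intermediate-value point $x_0$ with $m_\epsilon(x_0)=1$, then linear inversion of $h$ and the same triangle-inequality chain for $u_\epsilon$), and Case~2 uses $\bar H_\epsilon=\bar H_\epsilon^{cr}$ directly, then the quadratic Taylor bound $h(m)-h(1)\ge \tfrac{h''(1)}{4}(m-1)^2$ near $m=1$ to get the $\sqrt\epsilon$ rate for $m_\epsilon$, and the same mechanism for $u_\epsilon$. The only cosmetic difference is that in Case~2 the paper bounds $|(u_\epsilon)_x(x)|$ by writing $p_\epsilon=\int_{\mathbb T} j/m_\epsilon(y)\,dy$ directly (avoiding the intermediate-value step, which is cleaner since $m_\epsilon$ has a jump at $d_\epsilon$), but this is the same estimate you describe.
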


\begin{proof}
(i). When $m^*(j)> 1$ ( or $m^*(j) < 1$ ),  due to Lemma~\ref{existence with non-vanishing j}, we know  that $m_{\epsilon}(x)= m^{+}_{\bar{H}_\epsilon}(x) \geq m^*(j)$ (or $m_{\epsilon}(x)= m^{-}_{\bar{H}_\epsilon}(x) \leq m^*(j)$) is the continuous solution to $(E)_{\epsilon, j}$. Due to \eqref{transformed equation} and $h$ is strictly increasing on $\{ m^{+}_{\bar{H}_\epsilon}(x) : x\in \mathbb{T} \}$ (or strictly decreasing on $\{ m^{-}_{\bar{H}_\epsilon}(x) : x\in \mathbb{T} \}$), Case 1 follows directly from the argument in the proof of Lemma~\ref{convergence rate for increasing g}.

(ii). When $m^*(j)=1$, we have
\[
\bar{H}_\epsilon = \bar{H}_\epsilon^{cr} =\epsilon \max\limits_{\mathbb{T}} V + h(1),
\]
which implies
\[
\left|\bar{H}_\epsilon-\bar{H}_0 \right| \leq \max_\mathbb{T} |V| \ \epsilon.
\]

We now show the convergence rate of  $m_\epsilon \rightarrow 1$. Note that
\[
\lim_{m\rightarrow 1}\frac{h(m)-h(1)}{(m-1)^2}=\frac{1}{2}h''(1) > 0.
\]
Therefore, for any $\frac{1}{4}h''(1) \geq \eta>0$, there is a $\delta>0$ such that,  when $0<|m_\epsilon(x) -1|<\delta$, we have
\[
\left| \frac{h(m_\epsilon(x))-h(1)}{(m_\epsilon(x)-1)^2} - \frac{1}{2}h''(1) \right|  < \eta,
\]
which implies
\[
|m_\epsilon(x) - 1| \leq \frac{2 \sqrt{ |h(m_\epsilon(x))-h(1)| }}{\sqrt{h''(1)}} \leq \frac{2 \sqrt{\max_\mathbb{T} V - \min_\mathbb{T} V}}{\sqrt{h''(1)}} \sqrt{\epsilon} .
\]

The last part is to show the convergence rate of $u_\epsilon \rightarrow 0$. Notice first that, 
for any $\epsilon>0, x\in\mathbb{T}$, we have
\[
\left((u_\epsilon)_x(x)+p_\epsilon \right) \ m_\epsilon(x) = j, \qquad \text{ where }p_\epsilon=\int_{\mathbb{T}}\frac{j}{m_\epsilon(x)}dx.
\]
Consequently,  by Lemma~\ref{convergence for the decreasing g and nonvanishing j}, when $\epsilon$ is small enough, we get $|m_\epsilon(x) | > \frac{1}{2}$. Therefore,
\[
\begin{split}
\left|(u_\epsilon)_x(x)-0 \right| = & \left|\frac{j}{m_\epsilon(x)}-p_\epsilon \right| 
=j \left|\int_\mathbb{T} \left(\frac{1}{m_\epsilon(x)}-\frac{1}{m_\epsilon(y)} \right) dy\right| \\
\leq & j \int_\mathbb{T} \left|\frac{1}{m_\epsilon(x)}-\frac{1}{m_\epsilon(y)}\right| dy
=   j \int_\mathbb{T}    \frac{|m_\epsilon(y) -1| + | 1- m_\epsilon(x)|}{ |m_\epsilon(x) \ m_\epsilon(y)| } dy\\
\leq &16 j \frac{ \sqrt{\max_\mathbb{T} V - \min_\mathbb{T} V}}{\sqrt{h''(1)}} \sqrt{\epsilon}.
\end{split}
\]
Hence, 
\[
| u_\epsilon(x)-0|=\left|\int_0^{x}(u_\epsilon)_y(y)dy \right| \leq 16 j \frac{ \sqrt{\max_\mathbb{T} V - \min_\mathbb{T} V}}{\sqrt{h''(1)}} \sqrt{\epsilon}. \qedhere
\]
\end{proof}

\section*{Acknowledgement}
We would like to thank Prof. D. Gomes for introducing us the motivation of studying mean field game system and 
several useful discussions. 
X. Su is supported by both National
Natural Science Foundation of China (Grant No. 11301513) and ``the Fundamental Research Funds for the Central Universities". 

\bibliographystyle{alpha}
\bibliography{reference}

\begin{thebibliography}{HMC06}

\bibitem[dlL01]{Rafaelbook}
Rafael de~la Llave.
\newblock A tutorial on {KAM} theory.
\newblock In {\em Smooth ergodic theory and its applications ({S}eattle, {WA},
  1999)}, volume~69 of {\em Proc. Sympos. Pure Math.}, pages 175--292. Amer.
  Math. Soc., Providence, RI, 2001.

\bibitem[E99]{E99}
Weinan E.
\newblock Aubry-{M}ather theory and periodic solutions of the forced {B}urgers
  equation.
\newblock {\em Comm. Pure Appl. Math.}, 52(7):811--828, 1999.

\bibitem[Fat97a]{Fathi97b}
Albert Fathi.
\newblock Solutions {KAM} faibles conjugu\'ees et barri\`eres de {P}eierls.
\newblock {\em C. R. Acad. Sci. Paris S\'er. I Math.}, 325(6):649--652, 1997.

\bibitem[Fat97b]{Fathi97a}
Albert Fathi.
\newblock Th\'eor\`eme {KAM} faible et th\'eorie de {M}ather sur les syst\`emes
  lagrangiens.
\newblock {\em C. R. Acad. Sci. Paris S\'er. I Math.}, 324(9):1043--1046, 1997.

\bibitem[GIHK]{Gomes18}
Diogo Gomes, Hitoshi Ishii, Mitake Hiroyoshi, and Terai Kengo.
\newblock The selection problem for stationary mean-field games.
\newblock Preprint.

\bibitem[GNP17]{Gomes17}
Diogo Gomes, Levon Nurbekyan, and Mariana Prazeres.
\newblock One-dimensional stationary mean-field games with local coupling.
\newblock {\em Dyn. Games Appl.}, 2017.

\bibitem[HCM07]{HCM07}
Minyi Huang, Peter~E. Caines, and Roland~P. Malham\'e.
\newblock Large-population cost-coupled {LQG} problems with nonuniform agents:
  individual-mass behavior and decentralized {$\epsilon$}-{N}ash equilibria.
\newblock {\em IEEE Trans. Automat. Control}, 52(9):1560--1571, 2007.

\bibitem[HMC06]{HMC06}
Minyi Huang, Roland~P. Malham\'e, and Peter~E. Caines.
\newblock Large population stochastic dynamic games: closed-loop
  {M}c{K}ean-{V}lasov systems and the {N}ash certainty equivalence principle.
\newblock {\em Commun. Inf. Syst.}, 6(3):221--251, 2006.

\bibitem[LL06a]{LL06a}
Jean-Michel Lasry and Pierre-Louis Lions.
\newblock Jeux \`a champ moyen. {I}. {L}e cas stationnaire.
\newblock {\em C. R. Math. Acad. Sci. Paris}, 343(9):619--625, 2006.

\bibitem[LL06b]{LL06b}
Jean-Michel Lasry and Pierre-Louis Lions.
\newblock Jeux \`a champ moyen. {II}. {H}orizon fini et contr\^ole optimal.
\newblock {\em C. R. Math. Acad. Sci. Paris}, 343(10):679--684, 2006.

\bibitem[LL07]{LL07}
Jean-Michel Lasry and Pierre-Louis Lions.
\newblock Mean field games.
\newblock {\em Jpn. J. Math.}, 2(1):229--260, 2007.

\bibitem[RG16]{Gomes16}
Ferreira Rita and Diogo Gomes.
\newblock Existence of weak solutions to stationary mean-field games through
  variational inequalities.
\newblock 2016.
\newblock Preprint at https://arxiv.org/abs/1512.05828.

\end{thebibliography}
\end{document}